\newtheorem{theorem}{Theorem}[section]
\newtheorem{lemma}{Lemma}[section]
\newcommand{\N}{\mathbb{N}}
\newcommand{\R}{\mathbb{R}}
\newcommand{\gap}{\ensuremath{g}}
\DeclareMathOperator{\expect}{E}
\DeclareMathOperator{\prob}{P}
\DeclareMathOperator{\e}{e}
\DeclareMathOperator{\T}{T}
\DeclareMathOperator{\diff}{d}
\newcommand{\comment}[1]{}
\numberwithin{equation}{section}
\begin{document}
\title[Fluctuation of Optimal Alignments]{A Monte Carlo Approach to the Fluctuation Problem in Optimal Alignments of Random Strings}

\author[S.~Amsalu]{Saba Amsalu}
\address{Saba Amsalu,
Department of Information Science, Faculty of Informatics, University of Addis Ababa, Ethiopia}
\email{saba@uni-bielefeld.de}
\thanks{Saba Amsalu was supported by the Engineering and Physical Sciences Research Council [grant number EP/I01893X/1] and by Pembroke College Oxford.}
\author[R.A.~Hauser]{Raphael Hauser} 
\address{Raphael Hauser, 
Mathematical Institute, University of Oxford, 24-29 St Giles', Oxford OX1 3LB, United Kingdom}
\email{hauser@maths.ox.ac.uk}
\thanks{Raphael Hauser was supported by the Engineering and Physical Sciences Research Council [grant number EP/H02686X/1]}
\author[H.F.~Matzinger]{Heinrich Matzinger}
\address{Heinrich Matzinger,
School of Mathematics, Georgia Institute of Technology, 686 Cherry Street, Atlanta, GA 30332-0160 USA}
\email{matzi@math.gatech.edu}
\thanks{Heinrich Matzinger was supported by the Engineering and Physical Sciences Research Council [grant number EP/I01893X/1], IMA Grant SGS29/11, and by Pembroke College Oxford}

\subjclass{Primary 60K35; Secondary 60C05, 60F10, 62E20, 65C05, 90C27}


\keywords{Fluctuation of sequence alignment scores, percolation theory, Monte Carlo simulation, large deviations.}

\begin{abstract}The problem of determining the correct order of fluctuation of the optimal alignment
score of two random strings of length $n$ has been open for several decades. 
It is known \cite{VARTheta} that the biased expected effect of a random letter-change on the optimal score 
implies  an order of fluctuation linear in $\sqrt{n}$. 
However, in many situations where such  a biased effect is observed empirically,
it has  been impossible to prove analytically. The main result of this 
paper shows that when the rescaled-limit of the optimal alignment score increases in a certain direction, 
then the biased effect exists. On the 
basis of this result one can quantify a confidence level for the existence of such a biased effect and hence 
of an order $\sqrt{n}$ fluctuation based on simulation of
 optimal alignments scores.
This is an important step forward, as the correct order of 
fluctuation was previously known 
only for certain special distributions  
\cite{VARTheta},\cite{periodiclcs},\cite{bonettolcs},\cite{increasinglcs}.
To illustrate the usefulness of our new methodology, we apply it to optimal alignments of 
strings 
written in the DNA-alphabet. As scoring function, we use 
 the BLASTZ default-substitution matrix together with
  a realistic gap penalty. BLASTZ is one of the most widely 
used sequence alignment methodologies in bioinformatics. 
For this DNA-setting, we show that with a high level of confidence,
 the fluctuation of the optimal alignment score is of order $\Theta(\sqrt{n})$.
An important special case of optimal alignment score is the Longest Common
Subsequence (LCS) of random strings. For binary sequences with equiprobably symbols
the question of the fluctuation of the LCS remains open. The symmetry in that case
does not allow for our method. On the other hand, in real-life DNA sequences,
it is not the case that all letters occur with the same frequency. So, for many
real life situations, our method allows to determine the order of the fluctuation
up to a high confidence level.
\end{abstract}

\maketitle

\section{Introduction}

Let $x=x_1x_2\ldots x_n$ and $y=y_1y_2\ldots y_n$ be two finite strings written with symbols from a finite alphabet $\mathcal{A}$.
An {\it alignment with gaps} $\pi$ of $x$ and $y$ is 
a strictly increasing integer sequence contained in $[1,n]\times[1,n]$.
Thus,
$$\pi=((\mu_1,\nu_1),(\mu_2,\nu_2),\ldots,(\mu_k,\nu_k))$$
where
$1\leq \mu_1<\mu_2<\ldots<\mu_k\leq n$ and  $1\leq \nu_1<\nu_2<\ldots<\nu_k\leq n$.
The alignment $\pi$ {\em aligns} the symbol $x_{\mu_i}$ with $y_{\nu_i}$ for $i=1,2,\ldots,k$. 
The symbols in the strings $x$ and $y$ that are not aligned with a letter
are said to be {\em aligned with a gap}.
We will use the symbol $\gap$ to denote gaps and write $\mathcal{A}^*=\mathcal{A}\cup\{\gap\}$ for the 
augmented alphabet. 
A {\it scoring function} is a map $S$ from 
$\mathcal{A}^*\times \mathcal{A}^*$ to the set of real numbers.
In everything that follows, we take $S$ to be symmetric so that
$S(c,d)=S(d,c)$ for all $c,d\in \mathcal{A}^*$. 
The {\em alignment score according to $S$} under an alignment $\pi$ of two strings $x$ and $y$ 
is defined as 
$$S_\pi(x,y):=\sum_{i=1}^kS(x_{\mu_i},y_{\nu_i})+\sum_{j\notin \mu}S(x_j,\gap)
+\sum_{j\notin \nu}S(\gap,y_j),$$
where $\mu=\{\mu_1,\ldots,\mu_k\}$ and $\nu:=\{\nu_1,\ldots,\nu_k\}$.

An {\em optimal alignment} of two strings $x$ and $y$ 
is an alignment with gaps that maximizes the alignment score for a given scoring function. 
Note that the set of optimal alignments depends thus not only on $x$ and $y$, but also on the scoring 
function $S$. 

{\footnotesize As an example of an alignment with gaps, let us assume that one species' DNA contains the string
$x=AGTTCG$ and another's the string $y=AATTAC$, where $x$ and $y$ are thought of as 
potentially related. Consider the alignment $\pi$ given 
by the following diagram, 
$$
\begin{array}{c|c|c|c|c|c|c|c|c}
x& &A&G&T&T& &C&G\\\hline
y& &A&A&T&T&A&C&
\end{array}
$$
The alphabet $\mathcal{A}$ we consider in this example is $\mathcal{A}=\{A,T,C,G\}$, and  $\mathcal{A}^*=\mathcal{A}\cup\{\gap\}$ is the augmented alphabet.  
The alignment score under $\pi$ of $x$ and $y$ is given by
$$S_\pi(x,y):=S(A,A)+S(G,A)+S(T,T)+S(T,T)+S(\gap,A)+S(C,C)+S(G,\gap).$$
In this example $\pi$ is an optimal alignment when $S$ assigns a score of $1$ to identical letters and 
a score of $-1$ for two different letters aligned to one other or a letter aligned with a gap. 
}

Alignment scores are widely used in bioinformatics and natural language processing. 
In computational genetics, gaps are interpreted as letters
that disappeared in the course of evolution. The {\em historical alignment} of two DNA-sequences is the alignment with gaps that aligns letters that evolved from the same letter in the common ancestral DNA. This alignment is unknown, but 
if it were available, it would yield information about how closely related two biological species are, how long ago their genomes started to diverge, and what the phylogenetic tree of a chosen set of 
species looks like.   
An important task in bioinformatics is therefore to estimate which 
alignment is most likely to be the ``historic alignment''. 

When the scoring function is the log-likelihood that two letters
evolved from a common ancestral letter, alignments with maximal alignment 
score are also the most likely historic alignments, assuming that 
letters mutate or get deleted independently of their neighbors. This observation is the basis 
for using optimal alignment scores to test whether two sequences are related or not.
Unrelated sequences should be stochastically independent, and this should be reflected by a lower 
optimal alignment score. 
To understand how powerful such a relatedness test is, one needs
to understand the size of the fluctuation of the optimal alignment score, but the fluctuation depends 
of course on the stochastic model used for unrelated DNA sequences and on the scoring function.  

In this paper we consider two finite random strings $X=X_1X_2\ldots X_n$ and 
$Y=Y_1Y_2\ldots Y_n$ of length $n$ in which all letters $X_i$ $(i=1,\dots, n)$ 
and $Y_j$ $(j=1,\dots,n)$ are i.i.d.\ random variables that take values in a given finite alphabet 
$\mathcal{A}$. For any letter $a\in\mathcal{A}$, 
let $p_a$ denote the probability 
$$p_a=P(X_i=a)=P(Y_j=a).$$
Let $S:\mathcal{A}^*\times\mathcal{A}^*\rightarrow \mathbb{R}$ be a scoring function.
We denote the optimal alignment score of $X=X_1\ldots X_n$ and $Y=Y_1\ldots Y_n$ according     to $S$ by 
$$L_n(S):=\max_\pi S_\pi(X,Y)=\max_\pi S_\pi(X_1X_2\ldots X_n,Y_1Y_2\ldots Y_n),$$
where the maximum is taken over all the alignments $\pi$ with gaps aligning $X$ and $Y$. 
Let $\lambda_n(S)$ denote the rescaled expected optimal alignments score
$$\lambda_n(S)=\frac{E[L_n(S)]}{n}.$$
A simple subadditivity argument \cite{Sankoff1} shows that $\lambda_n(S)$ converges
as $n$ goes to infinity. We denote this limit by $\lambda(S)$ and hence
$$\lambda(S):=\lim_{n\rightarrow\infty}\lambda_n(S)
=\lim_{n\rightarrow\infty}\frac{E[L_n(S)]}{n}.$$

The rate of convergence of the last limit above , was bounded by Alexander \cite{Alexander},\cite{alexander2}.
We also give our own bound in the Appendix.

One of the important questions concerning optimal alignments is the asymptotic 
order of fluctuation when $n$ goes to infinity. Although McDiarmid's inequality implies that $VAR[L_n]$ 
is at most of order $O(n)$, it has been a long standing open problem as to whether or not this upper 
bound is tight up to a multiplicative constant, in other words, whether 
\begin{equation}
\label{mainorder}
VAR[L_n(S)]=\Theta(n)
\end{equation}
holds. Steel \cite{Steele86} has proven that for the Longest Common Subsequenc case,
(which is a special case of optimal alignment with the scoring function being
the identity matrix and a zero gap-penalty), one has $VAR[L_n(S_)]\leq n$. The rate of convergence
 Several conflicting conjectures have been proposed about this problem: While Watermann
conjectured \cite{Vingron} that the order is indeed given by \eqref{mainorder},  Chv\`atal and Sankoff conjectured 
a different order \cite{Sankoff1} which would be more in line with corresponding results on 
Last Passage Percolation (LPP)
models, where there exist several situations \cite{Aldous99},\cite{BaikDeiftJohansson99} in which it known that the order 
of fluctuation is the third root of the order of the expectation.

Optimal alignment scores can be reformulated as a LPP problem with correlated weigths.
 We find it  interesting and surprising, that our results are totally
different from the order found in others LPP models. 

In several special cases \cite{VARTheta},\cite{periodiclcs},\cite{increasinglcs}, the  order \eqref{mainorder} 
has been proven 
analytically. In each case the proof was based on the technique of reducing the fluctuation problem to 
the biased effect of a random change in the sequences: In \cite{bonettolcs} and 
\cite{VARTheta} it was established 
that if changing one letter at random has a positive biased effect on $L_n(S)$, 
then the order \eqref{mainorder} must hold. More specifically, for two given
letters $a, b\in\mathcal{A}$, let $(\tilde{X},\tilde{Y})$ denote
the sequence-pair obtained from $(X,Y)$ by changing exactly one entry, chosen uniformly at random among all the letters 
$a$ that appear in $X$ and $Y$, into a $b$.

{\footnotesize Take for example,  $x=aababc$ and $y=abbbbb$. Then,
there are a total of $4$ $a$'s when we count all the $a$'s in both sequences together.
Each of these $a$'s has thus a probability of $1/4$ to get chosen and replaced by
a $b$. Since only one $a$ is changed in both strings $x$ and $y $, we have that
after our letter change one of the strings will remain identical and the other will
be changed by one letter. Let us denote by $\tilde{x}$ and $\tilde{y}$
the sequences after the change. In this example, the $a$ in $y$ has a probability
of $1/4$ to be chosen. If it gets chosen $y$ is transformed into $bbbbbb$.
So, we have $P(\tilde{y}=bbbbbb,\tilde{x}=x)=1/4$. There are $3$ $a$3's in $x$.
So the probability that $x$ get changed is $3/4$. Hence,
$$P(\tilde{x}\in\{bababc,abbabc,aabbbc\} ,\tilde{y}=y)=3/4.$$
}

Let us denote the optimal alignment 
score of $\tilde{X}$ and $\tilde{Y}$ by 
$$\tilde{L}_n(S)=\max_\pi S_\pi(\tilde{X},\tilde{Y}).$$
\noindent In \cite{VARTheta}, it was now shown that if there is a constant $c>0$, 
not depending on $n$, such that 
$$E[\tilde{L}_n(S)-L_n(S)|X,Y]\geq c$$
holds with high probability, then the order \eqref{mainorder} follows. In this context 
``high probability'' is defined as a probability $1-O(n^{-\alpha n})$, for some constant $\alpha>0$ 
that does not depend on $n$. An alternative proof of this result is given in Lemma \ref{alterhut} in the 
next section. 

One of the shortcomings of the above-cited papers \cite{VARTheta}, \cite{bonettolcs} 
is that a strong asymmetry is required 
in the distribution on $\mathcal{A}$ used to generate the random strings $X,Y$ for it to be possible to 
prove the existence of a biased effect of random letter changes. In many situations of relevance to  
applications, the biased effect is visible in simulations but cannot be established analytically using the 
techniques  from \cite{VARTheta}. The present paper addresses this problem: Theorem \ref{samuti} 
establishes that as soon as 
\begin{equation}\label{lambda}
\lambda(S)-\lambda(S-\epsilon T)>0
\end{equation}
for any $\epsilon>0$, the biased effect of a random letter change exists, and this in turn 
implies the fluctuation order \eqref{mainorder}. In this context, let $a$ and $b$ be fixed elements of ${\mathcal A}$, and let $T:\mathcal{A}^*\times\mathcal{A}^*\rightarrow\mathbb{R}$ be the 
scoring function given by 
$T(a,c)=T(c,a):=S(b,c)-S(a,c)$ for all $c\in\mathcal{A}^*$ with $c\neq a$ and $T(d,c)=0$ when $d,c\neq a$.
Furthermore, let $T(a,a):=2(S(b,c)-S(a,c))$.

The practical importance of this result is that the validity of Condition \eqref{lambda} can be verified by Monte Carlo simulation up to any desired confidence level, and this in turn yields a test on whether the order 
\eqref{mainorder} holds, at the same confidence level. A practical example of such a test 
is given in Section \ref{monte}. 

\section{Details of the results}
We will consider strings of length $n$ written with letters from a finite alphabet
$\mathcal{A}$.

{\footnotesize Consider for example, the following 
two strings $x=babbababbba$ and $y=bbbbabbbabb$. We consider {\it alignments with gaps of 
two sequences} This means the letters are aligned with a letter or with a gap. 
Let us see
an example of an alignment with gaps $\pi$  of $x$ with $y$:
\begin{equation}
\label{exampleofalignment}
\begin{array}{c|c|c|c|c|c|c|c|c|c|c|c|c|c}
x& &b&a&b&b&a&b&a&b& &b&b&a\\\hline
y& &b&b&b&b&a&b&b&b&a&b&b&
\end{array}
\end{equation}
Alignment with gaps are used to compare similar sequences. For this purpose one uses
a scoring function $S$ from $\mathcal{A}^*\times\mathcal{A}^*$. Here $\mathcal{A}^*$
represents the alphabet $\mathcal{A}$ augmented by a symbol $G$ representing the gap.
The scoring function should measure how close letters are. The total score 
of an alignment is denoted by $S_\pi(x,y)$. It is the sum of the scores of the 
aligned symbols pairs. In the present example,
the alignment score  for the alignment $\pi$ is equal to:
\begin{equation*}
S_\pi(x,y):=S(b,b)+S(a,b)+2S(b,b)+S(a,a)+S(b,b)+S(a,b)\\
+S(b,b)+S(G,a)+2S(b,b,)+S(a,G).
\end{equation*}

An alignment which  maximizes for given strings $x$ and $y$ the alignment score is called {\it optimal alignment}. Of course which alignment is optimal depends on the scoring function we use. 
We will count the number of aligned symbol pairs appearing in an alignment of $x$ with $y$.
In the example of alignment $\pi$ presently under consideration -- see \eqref{exampleofalignment} -- 
we have $7$ times $b$ aligned with itself. We denote the number of times we see a $b$
aligned with a $b$ by
$Q_\pi(b,b)$. Hence in our example: $Q_\pi(b,b)=7$. In general, for any two 
letters $c,d$ from $\mathcal{A}^*$, let $Q_\pi(c,d)$ be the total number
of columns where  $c$  from $x$ gets aligned with a $d$ from $y$. 
Now, clearly we can write the total alignment score in terms of the values $Q_\pi(c,d)$:
\begin{equation}
\label{}
S_\pi(x,y)=\sum_{c,d\in\mathcal{A}^*}S(c,d)\cdot Q_\pi(c,d)
\end{equation}
We are next going to consider the effect of changing
a randomly chosen $a$ in $x$ or $y$ into a $b$. Among all the $a$'s in
$x$ and $y$ we chose exactly one with equal probability, so that the chosen letter 
will be either in $x$ or in $y$. Let $\tilde{x}$ and $\tilde{y}$ 
denote the sequences $x$ and $y$ after our random letter change.
Note that either $x=\tilde{x}$ or $y=\tilde{y}$, as only one letter changed. 
We want to calculate the expected change:
$$E[S_\pi(\tilde{x},\tilde{y})-S_\pi(x,y)]$$
We find the following formula
\begin{equation}\label{tildeQ*}
E[S_\pi(\tilde{x},\tilde{y})-S_\pi(x,y)]
=\frac{1}{n_a}\sum_{c\in\mathcal{A}^*}\left(Q_\pi(a,c)(S(b,c)-S(a,c))+Q_\pi(c,a)(S(c,b)-S(c,a))
\right)
\end{equation}
where $n_a$ denotes the total number of $a$'s in both strings $x$ and $y$ counted
together. 

To understand formula \eqref{tildeQ*} consider the example of an alignment $\pi$ 
given in \eqref{exampleofalignment} and let us calculate
the expected change in alignment score due to our random change. In $x$
there are two $a$'s which are aligned with a $b$. When any one of them gets chosen
and transformed into $b$, then the change in alignment score is $S(b,b)-S(a,b)$.
This event has probability $2/n_a$. Hence, for our conditional expectation, 
this adds a term $(S(b,b)-S(a,b))\cdot 2/n_a$.
There is also one letter $a$ in $x$ aligned with $a$, the change of which to a $b$ results 
in a change in score of $S(b,a)-S(a,a)$. The probability is $1/n_a$, so
this contributes $(S(b,a)-S(a,a))/n_a$ to the expected change in score.
Finally, there is one $a$ in $y$ which is aligned with an $a$. If this $a$
gets changed to a $b$, then the change is $(S(a,b)-S(a,a))$, which happens
with probability $1/n_a$. The contribution to the expected change from
this letter is thus $(S(a,b)-S(a,a))\cdot(1/n_a)$. Now let us assume that the alignment of a gap
gives the same value whether it is aligned with $a$ or $b$. When we chose an $a$ aligned 
with a gap for our random letter change, the score remains the same.
The contribution of the $a$'s aligned with gaps to the expected change
is thus $0$ in this case. 
Summing up the above contributions,  the expected change of the alignment score in our example 
is equal to
$$E[S(\tilde{x},\tilde{y})]
=\frac{2(S(b,b)-S(a,b))+1\cdot(S(b,a)-S(a,a))+1\cdot(S(a,b)-S(a,a))}{n_a},$$
where  $n_a=6$. Compare the above formula to \ref{tildeQ*}.

If we now define the functional $T$:
$$T:\mathcal{A}^*\times\mathcal{A}^*\rightarrow\mathbb{R}
$$
where for all $c\in\mathcal{A}^*$ where $c\neq a$, we have
$$T(a,c):=S(b,c)-S(a,c)$$ and
$$T(c,a):=S(c,b)-S(c,a)$$ and $T(c,d):=0$ if $d,c\neq a$.
Furthermore, $T(a,a)=2(S(b,a)-S(a,a))$.

Note that since $S$ is symmetric, we also have that $T$ is  symmetric.\\
The expected effect of our random change of letters corresponds
to the ``alignment score according to $T$'' rescaled by the total number
of $a$'s in $x$ and $y$. So equation \ref{tildeQ*} using $T$ becomes
\begin{equation}
\label{tildeQ*2}
E[S_\pi(\tilde{x},\tilde{y})-S_\pi(x,y)]=
\frac{\sum_{c,d\in\mathcal{A}^*} Q_\pi(c,d)\cdot T(c,d)}{n_a}=\frac{T_\pi(x,y)}{n_a}
\end{equation}
where $T_\pi(x,y)$ denote the score of the alignment $\pi$ aligning $x$ with $y$
and using as scoring function $T$ instead of $S$ .
}

Let us next present a theorem which shows that when 
$\lambda(S)-\lambda(S-\epsilon T)>0$, then
a random change of an $a$ into a $b$ has typically a positive biased effect
on the optimal alignment score $L_n(S)$:

\begin{theorem}
\label{samuti}Let $\mathcal{A}$ be a finite alphabet and $S:\mathcal{A}^*\times\mathcal{A}^*\rightarrow\mathbb{R}$ a scoring function. Let the function 
$T:\mathcal{A}^*\times\mathcal{A}^*\rightarrow\mathbb{R}$ defined as above for 
two given letters $a,b$ from ${\mathcal A}$. 
If there exists $\epsilon>0$, such that $\lambda(S)-\lambda(S-\epsilon T)>0$, then for any given constant $\delta>0$ there exists $\alpha>0$ so that the following holds true for all $n$ large enough,
\begin{equation}\label{h2}
P\left(\;E[\tilde{L}_n(S)-L_n(S)|X,Y]\geq 
\frac{\lambda(S)-\lambda(S-\epsilon T)}{\epsilon\cdot p_a}-\delta\;\right) \geq 1-n^{-\alpha\ln(n)},
\end{equation}
where $p_a=P(X_i=a)=P(Y_i=a).$
\end{theorem}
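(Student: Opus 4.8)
The plan is to bound the conditional expectation from below by the effect of the random letter change on a \emph{single, fixed} optimal alignment, and then to control that effect through the two rescaled limits $\lambda(S)$ and $\lambda(S-\epsilon T)$. \textbf{Step 1 (reduction to a fixed optimal alignment).} Fix, by a deterministic tie-breaking rule, an optimal alignment $\pi^{*}=\pi^{*}(X,Y)$ for $S$, so that $L_n(S)=S_{\pi^{*}}(X,Y)$ and $\pi^{*}$ is a measurable function of $(X,Y)$. Since $\tilde L_n(S)=\max_\pi S_\pi(\tilde X,\tilde Y)\ge S_{\pi^{*}}(\tilde X,\tilde Y)$, and since conditionally on $(X,Y)$ the only remaining randomness is the choice of which $a$ is flipped, formula \eqref{tildeQ*2} applies verbatim to the \emph{fixed} alignment $\pi^{*}$ and yields
\[
E[\tilde L_n(S)-L_n(S)\mid X,Y]\ \ge\ E[S_{\pi^{*}}(\tilde X,\tilde Y)-S_{\pi^{*}}(X,Y)\mid X,Y]\ =\ \frac{T_{\pi^{*}}(X,Y)}{n_a}.
\]
It therefore suffices to lower-bound $T_{\pi^{*}}(X,Y)/n_a$ with the stated probability.

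\textbf{Step 2 (optimality inequality).} Because $\pi^{*}$ is feasible for the maximization defining $L_n(S-\epsilon T)$, I would write
\[
L_n(S-\epsilon T)\ \ge\ (S-\epsilon T)_{\pi^{*}}(X,Y)\ =\ S_{\pi^{*}}(X,Y)-\epsilon\,T_{\pi^{*}}(X,Y)\ =\ L_n(S)-\epsilon\,T_{\pi^{*}}(X,Y),
\]
and rearranging gives the \emph{deterministic} bound $T_{\pi^{*}}(X,Y)/n_a\ \ge\ \bigl(L_n(S)-L_n(S-\epsilon T)\bigr)/(\epsilon\,n_a)$. Thus the whole problem collapses to showing that, with probability at least $1-n^{-\alpha\ln n}$, the numerator $L_n(S)-L_n(S-\epsilon T)$ is close to $n\bigl(\lambda(S)-\lambda(S-\epsilon T)\bigr)$ while $n_a$ is close to its mean.

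\textbf{Step 3 (concentration and the convergence rate).} I would assemble three high-probability estimates. First, $n_a$ is a sum of $2n$ i.i.d.\ indicators, so Hoeffding's inequality keeps it within $t$ of its expectation except on a set of probability $2\exp(-t^2/(2n))$. Second, both $L_n(S)$ and $L_n(S-\epsilon T)$ are functions of the $2n$ i.i.d.\ letters with bounded differences (changing one letter moves each score by at most a fixed multiple of $\max_{c,d}|S(c,d)|$, respectively of $\max_{c,d}|(S-\epsilon T)(c,d)|$), so McDiarmid's inequality controls each around its mean at any deviation scale $t$ with failure probability $\le 2\exp(-c\,t^2/n)$. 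Third, for the means I would use subadditivity \cite{Sankoff1}: by Fekete's lemma $E[L_n(S-\epsilon T)]=n\lambda_n(S-\epsilon T)\le n\lambda(S-\epsilon T)$, which gives the required upper bound on $L_n(S-\epsilon T)$ directly, whereas $E[L_n(S)]=n\lambda_n(S)=n\lambda(S)-n r_n$ with $r_n:=\lambda(S)-\lambda_n(S)\to0$, which supplies the lower bound on $L_n(S)$ once combined with McDiarmid. Choosing the deviation scale $t\asymp\sqrt{n}\,\ln n$ — the smallest scale that is still $o(n)$ — makes every failure probability at most $n^{-\alpha\ln n}$, and a union bound absorbs the constant factor. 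On the complementary event the numerator is at least $n\bigl(\lambda(S)-\lambda(S-\epsilon T)\bigr)-o(n)$ and $n_a=(1+o(1))\,E[n_a]$, so that, because $r_n\to0$ and all deviation terms are $o(n)$ while by hypothesis \eqref{lambda} the limiting numerator is a fixed positive constant, dividing reproduces the right-hand side of \eqref{h2} for all $n$ large enough. Combined with Step~1 this establishes the theorem.

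The genuinely delicate point is the lower bound on $L_n(S)$ in Step~3: its mean $n\lambda_n(S)$ lies \emph{below} the target $n\lambda(S)$, since $\lambda_n(S)\uparrow\lambda(S)$ from below. I must therefore simultaneously control the convergence gap $r_n=\lambda(S)-\lambda_n(S)$ and extract a lower-tail concentration statement carrying the super-polynomial probability $1-n^{-\alpha\ln n}$ at a deviation scale that is nevertheless $o(n)$. Matching a vanishing-but-not-too-slow rate against the scale $\sqrt n\,\ln n$ is where the Alexander-type estimate \cite{Alexander},\cite{alexander2} (or the bound proved in the Appendix) is needed, and it is the only ingredient that is not essentially immediate from Steps~1 and~2; the strict positivity in \eqref{lambda} is exactly what guarantees that this fixed positive constant dominates every $o(1)$ error introduced along the way.
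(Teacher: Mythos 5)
Your proposal is correct and follows essentially the same route as the paper: the reduction via a fixed optimal alignment together with linearity and the optimality inequality is the paper's Lemma \ref{combinatoriallemma}, and your three concentration estimates at scale $\sqrt{n}\ln n$ correspond exactly to the paper's events $A^n(S)$, $B^n(S)$ and $C^n(\delta)$, each shown to fail with probability at most $n^{-\alpha\ln n}$ by McDiarmid's inequality. You also correctly isolated the one non-trivial ingredient, namely that the lower bound on $L_n(S)$ requires the convergence-rate estimate $\lambda(S)-\lambda_n(S)=O(\sqrt{\ln n/n})$ of Lemma \ref{rateoflambdan}, while the bound for $L_n(S-\epsilon T)$ only needs the subadditivity inequality $\lambda_n(S-\epsilon T)\leq\lambda(S-\epsilon T)$.
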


In several instances \cite{VARTheta},\cite{fluctlcsnotsym},
it was proven that when a random
change on the strings has a positive biased expected effect on the score,
then the fluctuation order $VAR[L_n(S)]=\Theta(n)$ applies. 
For the special framework of the current paper, we prove this fact in Lemma \ref{alterhut} below. 
Together with Theorem \ref{samuti} this result implies that if there exists $\epsilon>0$ so that$\lambda(S)-\lambda(S-\epsilon T)>0$, then the fluctuation order \eqref{mainorder} holds, 
see Theorem \ref{canary}.

Section \ref{Theproof} is dedicated to proving Theorem \ref{samuti}.
The main idea behind the proof is quite straightforward and will be briefly explained here: Let 
$X=X_1\ldots X_n$ and $Y=Y_1\ldots Y_n$ as before. From Equation \eqref{tildeQ*2} in the 
example above it follows that for any optimal alignment $\pi$ of $X$ and $Y$ according to $S$,
we have 
\begin{equation}\label{uno}
E[\tilde{L}_n(S)-L_n(S)|X,Y]\geq \frac{T_{\pi}(X,Y)}{N_a}.
\end{equation}
Here $T_\pi(X,Y)$ denotes the alignment score of $\pi$
aligning $X$ and $Y$ according to the scoring function $T$.
Furthermore $N_a$ denotes the total number of $a$'s in $X$ and in $Y$
combined. By linearity of the alignment score, we find that
\begin{equation}\label{duo}
\epsilon\cdot T_{\pi}(X,Y)=S_\pi(X,Y)-(S-\epsilon T)_\pi(X,Y).
\end{equation}
Here $(S-\epsilon T)_\pi(X,Y)$ denotes the score of the alignment $\pi$
aligning $X=X_1\ldots X_n$ and $Y=Y_1\ldots Y_n$ but when we use
the scoring function $(S-\epsilon T)$ instead of $S$. The alignment $\pi$ is optimal 
for $S$ but not necessarily for $(S-\epsilon T)$.
Hence $S_\pi(X,Y)$ is equal to the optimal alignment score $L_n(S)$, but
$(S-\epsilon T)_\pi(X,Y)$ is less or equal to $L_n(S-\epsilon T)$.
This implies that
\begin{equation}\label{tres}
 S_\pi(X,Y)-(S-\epsilon T)_\pi(X,Y)\geq L_n(S)-L_n(S-\epsilon T).
\end{equation}
Combining inequalities \ref{uno}, \ref{duo},\ref{tres},
we obtain
\begin{equation}\label{quattro}
E[\tilde{L}_n(S)-L_n(S)|X,Y]\geq \frac{L_n(S)-L_n(S-\epsilon T)}{n}\frac{n}{\epsilon N_a^x}.
\end{equation}
Note that the right side of the last inequality above converges in probability
to
$$\frac{\lambda(S)-\lambda(S-\epsilon T)}{\epsilon \cdot p_a},$$
where $p_a$ is the probability of letter $a$. This already implies
that the probability on the left side of inequality \ref{h2} in Theorem \ref{samuti} goes
to $1$ as $n\rightarrow\infty$. The rate like in inequality \ref{h2}
can then easily be obtained from the Azuma-Hoeffding Theorem \ref{Azuma} given below. Again the details
of this proof are given in the next section.
Next, let us  formulate the lemma below which shows, that a biased effect of our random
letter change implies the desired order of the fluctuation. We give the proof
because unlike in \cite{VARTheta}, we also consider the case where we have more than
$2$ letters in the alphabet.

\begin{lemma}\label{alterhut} Assume that there exist constants $\Delta>0$ and $\alpha>0$ such that 
for all $n$ large enough it is true that 
\begin{equation}\label{lalala}
P\left(\;
E[\tilde{L}_n(S)-L_n(S)|X,Y]\geq \Delta\;\right)
\geq 1-n^{-\alpha \ln(n)}.
\end{equation}
Then, we have $VAR[L_n(S)]=\Theta(n)$. 
\end{lemma}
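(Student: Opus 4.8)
The plan is to prove the two matching bounds $VAR[L_n(S)]=O(n)$ and $VAR[L_n(S)]=\Omega(n)$ separately, the second being the substantial one.

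For the upper bound I would argue as in the introduction: altering a single one of the $2n$ independent letters $X_1,\dots,X_n,Y_1,\dots,Y_n$ changes $L_n(S)$ by at most $c_0:=2\max_{c,d\in\mathcal A^*}|S(c,d)|$, since an optimal alignment for the new pair, re-scored against the old pair, differs in only one aligned column, and vice versa. Hence $L_n(S)$ has the bounded-differences property with constants $c_0$, and McDiarmid's inequality yields $VAR[L_n(S)]\le \tfrac14\cdot 2n\cdot c_0^2=O(n)$.

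For the lower bound the point is to choose a conditioning that isolates precisely the $a\to b$ changes to which hypothesis \eqref{lalala} refers; this is the only place where the restriction to a two-letter alphabet used in \cite{VARTheta} must be removed. I would pool the strings into one word $W=X_1\dots X_nY_1\dots Y_n$ of length $2n$ and condition on the \emph{environment} $\mathcal E$ consisting of the set $P$ of positions carrying a letter in $\{a,b\}$ together with the values of all letters \emph{outside} $P$. Given $\mathcal E$, the letters on $P$ are conditionally i.i.d., equal to $a$ with probability $\rho:=p_a/(p_a+p_b)$ and to $b$ otherwise (here $p_a,p_b>0$), so the number $K$ of $a$'s on $P$ is conditionally $\mathrm{Binomial}(|P|,\rho)$. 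With overwhelming probability $|P|=\Theta(n)$, whence $K$ has conditional standard deviation of order $\sqrt n$. Crucially, decreasing $K$ by one while keeping $\mathcal E$ and the positions on $P$ fixed is exactly the operation of turning one $a$ into a $b$.

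Writing $\psi_{\mathcal E}(k):=E[L_n(S)\mid \mathcal E,\,K=k]$, a short exchangeability argument shows that the conditional law of $W$ given $(\mathcal E,K=k)$ is obtained from that given $(\mathcal E,K=k{+}1)$ by picking one of the $a$'s on $P$ uniformly at random and recolouring it $b$; since every $a$ in $W$ lies on $P$, this is precisely the random letter change defining $\tilde L_n(S)$. Consequently
\[
\psi_{\mathcal E}(k)-\psi_{\mathcal E}(k+1)=E\big[\,E[\tilde L_n(S)-L_n(S)\mid X,Y]\;\big|\;\mathcal E,\,K=k+1\,\big].
\]
Because a single letter change moves $L_n(S)$ by at most $c_0$, the inner expectation is bounded below by $-c_0$ always and by $\Delta$ on the event in \eqref{lalala}, whose complement has probability at most $n^{-\alpha\ln n}$. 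A Markov/union-bound argument over the at most $2n$ values of $k$ then shows that for all $\mathcal E$ outside a negligible set, $\psi_{\mathcal E}(k)-\psi_{\mathcal E}(k+1)\ge \Delta/2$ holds simultaneously for every $k$ in the window $[\,E[K\mid\mathcal E]\pm\sqrt{|P|}\,]$, each of which still carries non-negligible conditional mass. On this window $\psi_{\mathcal E}$ is therefore decreasing with slope at least $\Delta/2$, and since $K$ places $\Omega(1)$ conditional probability on each half of the window, $VAR[\psi_{\mathcal E}(K)\mid\mathcal E]\ge c\,\Delta^2\,|P|=\Omega(n)$. The law of total variance then gives $VAR[L_n(S)]\ge E\big[VAR[\psi_{\mathcal E}(K)\mid \mathcal E]\big]=\Omega(n)$, which with the upper bound yields \eqref{mainorder}.

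The main obstacle, and the reason the super-polynomial rate $n^{-\alpha\ln n}$ in \eqref{lalala} (rather than a mere ``with high probability'') is needed, is the final transfer step: the hypothesis controls the biased effect only with high probability over $(X,Y)$, whereas the variance argument requires a slope bound holding deterministically for \emph{every} relevant $k$ once $\mathcal E$ is fixed. The rate $n^{-\alpha\ln n}$ beats the $\mathrm{poly}(n)$ loss incurred in passing from an unconditional high-probability statement to one conditioned on the thin binomial slices $\{K=k\}$, and this is exactly what lets the per-$k$ bound survive.
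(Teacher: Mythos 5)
Your proposal is correct, but it takes a genuinely different route from the paper's proof. The paper builds an explicit coupled path of string-pairs $(X(k,l),Y(k,l))$ by repeatedly applying the random $a\to b$ change, so that the realized score $f(k,l)$ behaves like a biased random walk in $k$; it then conditions on the two counts $(N_b,N_{ab})$, compares $f(N_b,N_{ab})$ against $f(N_b^*,N_{ab})$ for an independent copy $N_b^*$, and needs two separate probabilistic inputs in Lemma \ref{rafi}: a local-CLT plus union-bound step (event $H^n_I$) to transfer hypothesis \eqref{lalala} onto the thin slices $\{N_b=k,N_{ab}=l\}$, and an Azuma--Hoeffding martingale step (event $H^n_{II}$) to keep the realized increments of $f$ close to their conditional drift. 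You instead condition on the environment $\mathcal E$ and work with $\psi_{\mathcal E}(k)=E[L_n(S)\mid\mathcal E,K=k]$, which is deterministic once $\mathcal E$ is fixed; your exchangeability identity (which is correct: deleting a uniformly chosen $a$ from a uniform $(k{+}1)$-subset of $P$ yields a uniform $k$-subset, and every $a$ in the strings lies in $P$) identifies the consecutive differences of $\psi_{\mathcal E}$ with the conditionally averaged biased effect, so the hypothesis bounds the slope of $\psi_{\mathcal E}$ directly and the law of total variance finishes the job. The net gain of your route is that the martingale concentration step disappears entirely --- fluctuations of the score around its drift never need to be controlled, since only the drift enters the variance of a conditional expectation --- and the two-dimensional slices $\{N_b=k,N_{ab}=l\}$ (probability $\asymp 1/n$) are replaced by one-dimensional binomial slices $\{K=k\}$ (probability $\asymp 1/\sqrt{n}$), making the transfer cheaper; both arguments rely in the same essential way on the super-polynomial rate $n^{-\alpha\ln n}$ surviving division by slice probabilities and a polynomial union bound, exactly as you emphasize. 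What the paper's heavier construction buys is the explicit random-walk coupling, which it reuses almost verbatim for the four-letter DNA alphabet in Section 4, and a slope statement about the realized score $f$ itself rather than only its conditional mean. Two small points to tighten in your write-up: the anti-concentration step should use two sub-intervals of the window separated by $\gtrsim\sqrt{|P|}$, each carrying $\Omega(1)$ conditional mass, so that $|\psi_{\mathcal E}(K)-\psi_{\mathcal E}(K^*)|\gtrsim\Delta\sqrt{|P|}$ on the event you restrict to; and the $O(n)$ upper bound is cleanest via Efron--Stein (or by integrating McDiarmid's tail), which the paper simply quotes in its introduction rather than reproving.
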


\begin{proof} 
Let $N_b$ denote the total number of symbols $b$ in the string $X=X_1X_2\ldots X_n$ and $Y=Y_1Y_2\ldots Y_n$
combined. (This means that we take the number of $b$'s in $X$ and the number of $b$'s
in $Y$ and add them together to get $N_b$).
Note that $N_b$ has a binomial distribution with
$$E[N_b]=2p_b\cdot n,VAR[N_b^x]=4p_b(1-p_b)n,$$
where $p_b:=P(X_i=b)=P(Y_i=b)$.

Let $N_{ab}$ denote the total number of symbols $b$ and $a$'s in the string 
$X=X_1X_2\ldots X_n$ and $Y=Y_1Y_2\ldots Y_n$ combined.
Note that $N_{ab}$ has a binomial distribution with
$$E[N_{ab}]=2(p_a+p_b)\cdot n,$$
where $p_a:=P(X_i=a)=P(Y_i=a)$.

Next we are going to define a collection of random string-pairs 
$(X(k,l),Y(k,l))$ for every $l\leq 2n$ and $k\leq l$. The string-pair
$(X(k,l),Y(k,l))$ has its distribution equal to the string-pair
$$(X,Y)=(X_0X_1\ldots X_n,Y_1Y_2\ldots Y_n)$$
conditional on $N_b=k,N_{ab}=l$. Hence,
$$\mathcal{L}(X(k,l),Y(k,l))=\mathcal{L}(X,Y|N_b=k,N_{ab}=l).$$
For given $l\leq 2n$, we define $(X(k,l),Y(k,l))$ by induction on $k$: 
For this let $(X(0,l),Y(0,l))$ denote a string-pair of length $n$ which is independent
of $N_b$ and of $N_{ab}$. We also, require that $(X(0,l),Y(0,l))$ has its distribution
equal to $(X,Y)$ conditional on $N_b=0$ and $N_{ab}=l$. Then, we chose one $a$
at random\footnote{That is, we chose an $a$ at random among
all $a$'s in $X$ and in $Y$ with equal probability.} 
in $(X(0,l),Y(0,l))$ and change it into a $b$. This yields the string-pair
$(X(1,l),Y(1,l))$. Once $(X(k,l),Y(k,l))$ is obtained, we chose an $a$ at random in 
$(X(k,l),Y(k,l))$ and change it into a $b$. This then give the string-pair
$(X(k+1,l),Y(k+1,l))$. We go on until $k=l$. We do this construction by induction 
on $k$ for every $l=1,2,\ldots,n$.

Now, due to invariance under permutation, we can see that indeed
with this definition we obtain
that 
$$(X(k,l),Y(k,l))$$
has the distribution of $(X,Y)$ given $N_b=k,N_{a,b}=l$.
Hence, $(X(N_b,N_{ab}),Y(N_b,N_{ab}))$ has the same distribution as $(X,Y)$.
So, the optimal alignment score of $X(N_b,N_{ab})$ and $Y(N_b,N_{ab})$
has same distribution as the optimal alignment score of $X$ and $Y$.
Hence, we also have the same variance:
\begin{equation}
\label{VARfN}
VAR[f(N_b,N_{ab})]=VAR[L_n(S)]
\end{equation}
where $f(N_b,N_{ab})$ denotes the optimal alignment score of $X(N_b,N_{ab})$ and $Y(N_b,N_{ab})$.
(In other words, $f(k,l)$ is defined to be the optimal alignment score of $X(k,l)$ and $Y(k,l)$.)
By conditioning, we only can  reduce the variance and hence:
\begin{equation}
\label{VARfN2}
VAR[f(N_b,N_{ab})]\geq E[\; VAR[f(N_b,N_{ab})|f,N_{ab}]\;].
\end{equation}

Note for any random variable $W$ we have that the variance of $W$ is half the variance
of $W-W^*$ where $W^*$ designates an independent copy of $W$. So,
we have
$$VAR[W]=0.5\cdot E[(W-W^*)^2].$$
Let us apply this idea
to \ref{VARfN2}. For this let $N^*_b$ be a variable which conditional on $N_{ab}$ is independent 
 of $N_b$ and has same distribution as $N_b$. Hence, we request that for every $i\leq n$, we have:
$$\mathcal{L}(N^*_b,N_b|N_{ab}=i)=\mathcal{L}(N_b|N_{ab}=i)\otimes\mathcal{L}(N_b^*|N_{ab}=i)$$
and
$$\mathcal{L}(N_b|N_{ab}=i)=\mathcal{L}(N_b^*|N_{ab}=i).$$ 
We also assume that $N^*_b$ is independent of $f(.,.)$.

Then, we have that
\begin{equation}
\label{VARfN3}
VAR[f(N_b,N_{ab})|f,N_{ab}]=0.5\cdot E[\;(f(N_b,N_{ab})-f(N_b^*,N_{ab}))^2|f,N_{ab}]
\end{equation}
Let now $c_2>c_1>0$ be  two constants not depending on $n$. We will see later how we have to select 
these constants. Let $I^n$ be the integer interval
$$I^n:=\left[E[N_b]-c_2\sqrt{n},E[N_b]+c_2\sqrt{n}\right].$$
Let $$G^n_I$$ be the event that
$N_b$ and $N_b^*$ are both in the interval $I^n$.

Let $$G^n_{II}$$ be the event that
$$|N_b-N_b^*|\geq c_1\sqrt{n}.$$
Let $G^n$ be the event:
$$G^n:=G^n_I\cap G^n_{II}.$$
Let $J^n$ denote the integer interval
$$J^n:=[E[N_{ab}]-\sqrt{n},E[N_{ab}]+\sqrt{n}].$$
Let $K^n$ be the event that $N_{ab}$ lies within the interval $J^n$. 

Let $H^n$ be the event that for any $l\in J^n$, we have: 
for any integers $x<y$
in the interval $I^n$ which are  apart by at least
$c_1\sqrt{n}$, the average slope of $f(.,l)$ between $x$ and $y$ is
greater equal than $\Delta/2$, hence:
$$\frac{f(y,l)-f(x,l)}{y-x}\geq \Delta/2.$$
Now, clearly when the events $G^n$, $H^n$  and $K^n$ all hold, then we have
$$|f(N_b,N_{ab})-f(N_b^*,N_{ab})|^2\geq 0.25c_1^2\Delta^2\cdot n.$$
This implies that
\begin{equation}
\label{VARfN4}
E[\;E(f(N_b,N_{ab})-f(N_b^*,N_{ab}))^2|f,N_{ab}]]\geq 
P(G^n\cap H^n\cap K^n)\cdot 0.125 c_1^2\Delta^2\cdot n.
\end{equation}
We can now combine equations \ref{VARfN}, \ref{VARfN2}, \ref{VARfN3} and \ref{VARfN4}, to obtain
\begin{equation}
\label{VARfN5}
VAR[L_n(S)]\geq P(G^n\cap H^n\cap K^n)\cdot 
0.25c_1^2\Delta^2\cdot n.
\end{equation}
and hence
\begin{equation}
\label{VARfN6}
VAR[L_n(S)]\geq (1-P(G^{nc})-P(H^{nc})-P(K^{nc}))\cdot 
0.25c_1^2\Delta^2\cdot n.
\end{equation}
By the Central Limit Theorem, when taking $c_2$ large enough (but not depending
on $n$), we get that the limit $\lim_{n\rightarrow\infty}P(G^n_I)$ gets as close to $1$
as we want. Similarly, looking at Lemma \ref{lemmaGII}, we see that
taking $c_1>0$ small enough (but not depending on $n$), the limit 
$\lim_{n\rightarrow\infty}P(G^n_{II})$ gets also as close to $1$ as we want.
Hence, taking $c_1>0$ small enough and $c_2>0$ large enough, we get the 
the limit for $n\rightarrow\infty$ of $P(G^{nc})$ as close to $0$ as we want.
By Lemma \ref{rafi}, we know that $P(H^{nc})$ goes to $0$ as $n\rightarrow\infty$.
Finally by the Central Limit Theorem, the probability $P(K^{nc})$ converges to
a number bounded away from $1$ as $n\rightarrow\infty$. Applying all of this,
to inequality \ref{VARfN6}, we find that for $c_1>0$ small enough and $c_2>0$
large enough, (but both not depending on $n$), we have: 
there exists a constant $c>0$ not depending on $n$ so that for all $n$ large enough, we have
$$VAR[L_n(S)]\geq cn,$$
as claimed in the lemma.
\end{proof}

\begin{lemma} \label{lemmaGII} It is true that 
$$P(G^n_{II})\rightarrow 2P\left( \mathcal{N}(0,1)\geq \frac{c_1}{\sqrt{2p_b}}\right)$$
as $n\rightarrow\infty$
\end{lemma}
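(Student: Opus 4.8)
The plan is to pin down the exact joint law of $(N_b,N_b^*)$ and reduce the event $G^n_{II}=\{|N_b-N_b^*|\ge c_1\sqrt n\}$ to a central limit statement for a sum with a random number of terms. The key structural observation is that, conditionally on $N_{ab}=l$, each of the $l$ letters counted by $N_{ab}$ is a $b$ with probability $q:=p_b/(p_a+p_b)$ and an $a$ otherwise, independently across letters, so that $N_b\mid\{N_{ab}=l\}\sim\mathrm{Bin}(l,q)$ and, by construction, $N_b^*$ is a conditionally independent copy with the same law. I would make this concrete via the thinning representation
$$
N_b=\sum_{i=1}^{N_{ab}}\xi_i,\qquad N_b^*=\sum_{i=1}^{N_{ab}}\xi_i^*,
$$
where $\{\xi_i\}$ and $\{\xi_i^*\}$ are mutually independent i.i.d.\ $\Ber(q)$ sequences, both independent of $N_{ab}$; a direct check shows this reproduces the correct marginal $N_b\sim\mathrm{Bin}(2n,p_b)$ together with the required conditional independence given $N_{ab}$. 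Setting $\eta_i:=\xi_i-\xi_i^*$, the quantity of interest is the randomly indexed sum $N_b-N_b^*=\sum_{i=1}^{N_{ab}}\eta_i$, in which the $\eta_i$ are i.i.d., bounded, of mean zero and variance $2q(1-q)$, and independent of the index $N_{ab}$.

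With this in hand the proof is a central limit theorem for random sums. Since $N_{ab}\sim\mathrm{Bin}(2n,p_a+p_b)$, the weak law gives $N_{ab}/n\to 2(p_a+p_b)$ in probability, and because $N_{ab}$ is independent of the $\eta_i$ the hypotheses of Anscombe's theorem hold immediately, yielding
$$
\frac{N_b-N_b^*}{\sqrt n}\ \Rightarrow\ \mathcal N(0,\sigma^2),\qquad \sigma^2=2q(1-q)\cdot 2(p_a+p_b)=\frac{4p_ap_b}{p_a+p_b}.
$$
As the Gaussian limit has a continuous distribution function, weak convergence upgrades to convergence of the tail at the fixed level $c_1$, so that $P(G^n_{II})\to 2\,P(\mathcal N(0,1)\ge c_1/\sigma)$; in the symmetric case $p_a=p_b$ one has $\sigma^2=2p_b$, which reproduces the constant $c_1/\sqrt{2p_b}$ recorded in the statement. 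An equivalent route avoiding Anscombe is to condition on $N_{ab}=l$, apply the Berry--Esseen bound to $\sum_{i\le l}\eta_i$ (bounded summands make the error $O(l^{-1/2})$ uniformly in $l$), and average over $l$ restricted to the window $J^n$ of the main proof, on which $l/n$ lies within $O(n^{-1/2})$ of $2(p_a+p_b)$.

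The step I expect to be the main obstacle is the correct treatment of the shared randomness $N_{ab}$: one must resist computing the limiting variance from the unconditional law $N_b\sim\mathrm{Bin}(2n,p_b)$, since conditioning on $N_{ab}$ strictly reduces the variance, and one must simultaneously control the order-$\sqrt n$ fluctuation of the number of summands $N_{ab}$ about its mean. Both issues are exactly what the random-index central limit theorem (or the uniform Berry--Esseen estimate over $l\in J^n$) is designed to handle; once that is in place, the thinning bookkeeping and the passage from weak convergence to the tail probability, using continuity of the normal distribution, are routine.
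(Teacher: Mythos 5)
Your proof is correct, and it takes a genuinely different route from the paper's. The paper splits $P(G^n_{II})$ by the law of total probability over the window $J^n(c)$ for $N_{ab}$, applies the CLT conditionally on $N_{ab}=k$ together with a Berry--Esseen estimate $|\epsilon^n_k|\leq C/\sqrt{n}$ uniform over $k\in J^n(c)$, sandwiches the conditional probability between $2P(\mathcal{N}(0,1)\geq a^n_{\pm})$, and finally sends $n\rightarrow\infty$ and then $c\rightarrow\infty$. Your primary argument compresses all of this into the thinning representation $N_b=\sum_{i\leq N_{ab}}\xi_i$, $N_b^*=\sum_{i\leq N_{ab}}\xi_i^*$ plus a single application of the random-index CLT (Anscombe) and Slutsky's theorem; your fallback route---conditional Berry--Esseen averaged over the window---is essentially the paper's own proof. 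Beyond brevity, your bookkeeping buys correctness of the constant, and it exposes a slip in the paper: conditionally on $N_{ab}=k$ the variance of $N_b-N_b^*$ is $2kq(1-q)$, not the $2kq$ the paper uses (and the paper's bounds $a^n_{\pm}$ additionally substitute $2p_b$ where its own $2kq\approx 4p_bn$ would give $4p_b$). The true limit, as you compute, is
$$P(G^n_{II})\rightarrow 2P\left(\mathcal{N}(0,1)\geq\frac{c_1}{\sigma}\right),\qquad \sigma^2=\frac{4p_ap_b}{p_a+p_b},$$
which coincides with the $c_1/\sqrt{2p_b}$ of the statement only in the symmetric case $p_a=p_b$, exactly as you observe. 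The discrepancy is immaterial for the lemma's only use: in the proof of Lemma \ref{alterhut} one needs only that $\lim_{n\rightarrow\infty}P(G^n_{II})$ can be made arbitrarily close to $1$ by choosing $c_1$ small, and this holds for any strictly positive $\sigma$; still, the constant the lemma should assert is yours, not the stated one.
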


\begin{proof} Let $c>0$ be constant. Let $J^n(c)$ be the interval
$$J^n(c)=[\;E[N_{ab}]-c\sqrt{n},E[N_{ab}]+c\sqrt{n}\;].$$
Let $K^n(c)$ denote the event that $N_{ab}$ is in $J^n(c)$. Note that by Law of Total Probability:
\begin{equation}
\label{totalIII}
P(G^n_{II})=P(G^n_{II}|J^n(c))P(J^n(c))+P(G^n_{III}|J^{nc}(c))P(J^{nc}(c)).
\end{equation}
Now
\begin{equation}
\label{sun}
P(G^n_{II}|J^n(c))=\sum_{k\in J^n(c)}P(G^n_{II}|N_{ab}=k)\cdot P(N_{ab}=k|J^n(c)).
\end{equation}

But  conditioning on $N_{ab}=k$, the variables
$N_b$ and $N_b^*$ become binomial with parameters $p_b/(p_a+p_b)$ and $k$.
Furthermore, $N_b$ and $N_b^*$ are independent of each other
conditional on $N_{ab}=k$. We can hence apply the Central Limit Theorem
and find that conditional on $N_{ab}=k$,
the variable $N_b-N_b^*$ is close to normal with expectation $0$ and
variance $2kq$, where $q:=p_b/(p_a+p_b)$. Hence, by Central Limit Theorem, the probability of $G^n_{II}$,
conditional on $N_{ab}=k$, is approximated by the following probability
$$P\left(|\mathcal{N}(0,2kq)|\geq c_1\sqrt{n} \right)
=2P\left(\mathcal{N}(0,1)\geq \frac{c_1\sqrt{n}}{\sqrt{2kq}}\right).$$
Let us denote by $\epsilon^n_k$ the approximation error, so that
$$\epsilon^n_k:=P(G^n_{II}|N_{ab}=k)-2P\left(\mathcal{N}(0,1)\geq \frac{c_1\sqrt{n}}{\sqrt{2kq}}\right).$$

When $k$ is in $J^n(c)$, then the expression
$$\frac{c_1\sqrt{n}}{\sqrt{2kq}}$$ ranges between
$$a^n_-:=\frac{c_1}{\sqrt{2p_b+2cq/\sqrt{n}}}$$
and
$$a^n_+:=\frac{c_1}{\sqrt{2p_b-2cq/\sqrt{n}}}.$$
From this and Equation \eqref{sun} it follows that
\begin{align}
\label{today}
\sum_{k\in J^n(c)}\epsilon^n_k\cdot& P(N_{ab}=k|J^n(c))
+2P(\mathcal{N}(0,1)\geq a^n_-)
 \leq P(G^n_{III}|J^n(c))\\ \label{today2}
&\leq\sum_{k\in J^n(c)}\epsilon^n_k\cdot P(N_{ab}=k|J^n(c))+2P(\mathcal{N}(0,1)\geq a^n_+)
\end{align}
Assume that $n$ is large enough, (recall that $c>0$ does not depend on $n$),
so that the left most point of $J^n(c)$ is above $n(p_a+p_b)/2$. (How large $n$ needs
be for this depends on $c$). Then, when $k\in J^n(c)$ we have for $n$ large enough,
that $k\geq n(p_a+p_b)/2$. Note that by Berry-Essen inequality we have
that 
$$|\epsilon^n_k|\leq \frac{C^*}{\sqrt{k}}$$
and hence, for all $k\in J^n(c)$ (provided $n$ is large enough),
we find
that
\begin{equation}
\label{C}
|\epsilon^n_k|\leq \frac{C}{\sqrt{n}}
\end{equation}
where $C,C^*>0$ are constants not depending on $n$.
Using  \eqref{C}, we can  rewrite the inequalities given in \eqref{today} and \eqref{today2},
and obtain that for all $n$ large enough we have:
\begin{equation}
\label{today3}
-\frac{C}{\sqrt{n}}
+2P(\mathcal{N}(0,1)\geq a^n_-)
 \leq P(G^n_{III}|J^n(c))\leq
\frac{C}{\sqrt{n}}+2P(\mathcal{N}(0,1)\geq a^n_+).
\end{equation}
When $n\rightarrow\infty$, we have that $a^n_-$ and $a^n_+$ both converge to
$c_1/\sqrt{2p_b}$ and $C/\sqrt{n}$ goes to $0$. Hence, we can apply the Hospital rule
for limits
to the system of inequalities \ref{today3} and find that
\begin{equation}
\label{lim}
P(G^n_{III}|J^n(c))\rightarrow 2P(\mathcal{N}(0,1)\geq \frac{c_1}{\sqrt{2p_b}} )
\end{equation}
as $n\rightarrow\infty$.
Note that by the Central limit theorem, the probability of $J^n(c)$ converges as $n\rightarrow \infty$.
Let $\epsilon(c)$ denote the limit
$$\epsilon(c)=\lim_{n\rightarrow\infty}P(J^{nc}(c)).$$
Taking the lim sup and lim inf of Equation \eqref{totalIII} and using \eqref{lim}
we get 
\begin{align}
\label{coincoin}
 &2P(\mathcal{N}(0,1)\geq \frac{c_1}{\sqrt{2p_b}} )\cdot(1-\epsilon(c))
\leq
\liminf_{n\rightarrow\infty}P(G^n_{II})\leq \limsup_{n\rightarrow\infty}P(G^n_{II})\leq\\
&\label{coincoin2}\leq \;
2P(\mathcal{N}(0,1)\geq \frac{c_1}{\sqrt{2p_b}} )\cdot(1-\epsilon(c))\;+\;\epsilon(c).
\end{align}
Note that the last two inequalities above hold for any $c>0$ not depending on $n$. Furthermore,
 $\epsilon(c)\rightarrow 0$ as $c\rightarrow\infty$. So, letting $c$ go to infinity
we finally find by l'Hospital rule applied to \ref{coincoin} and \ref{coincoin2} that:
$$P(G^n_{II})\rightarrow 2P(\mathcal{N}(0,1)\geq \frac{c_1}{\sqrt{2p_b}} )$$
as $n\rightarrow\infty$.
\end{proof}

\begin{lemma}\label{rafi}
Assume that Inequality \eqref{lalala} holds for $\alpha>0$ not depending on $n$. Then,
we have that
$$P(H^n)\rightarrow 1$$
as $n\rightarrow\infty$.
\end{lemma}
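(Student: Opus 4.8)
The plan is to exploit the Markov structure of the construction $k\mapsto(X(k,l),Y(k,l))$ for each fixed $l$, together with the bounded-difference property of optimal alignment scores and the Azuma--Hoeffding inequality. The starting observation is that changing one randomly chosen $a$ into a $b$ in $(X(k,l),Y(k,l))$ produces exactly $(X(k+1,l),Y(k+1,l))$, so the conditional expected increment of $f$ along the chain is precisely the ``biased effect'' quantity: writing $\mathcal{F}_k^{(l)}$ for the $\sigma$-algebra generated by $(X(j,l),Y(j,l))$ with $j\le k$, the Markov property gives $E[f(k+1,l)-f(k,l)\mid\mathcal{F}_k^{(l)}]=E[\tilde L_n(S)-L_n(S)\mid X,Y]$ when the latter is evaluated at the string pair $(X(k,l),Y(k,l))$. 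Let $\beta_{k,l}$ denote the probability that this conditional increment is at least $\Delta$. Since $(X(N_b,N_{ab}),Y(N_b,N_{ab}))$ has the law of $(X,Y)$, conditioning on $\{N_b=k,N_{ab}=l\}$ shows that Inequality \eqref{lalala} is equivalent to $\sum_{k,l}(1-\beta_{k,l})\,P(N_b=k,N_{ab}=l)\le n^{-\alpha\ln n}$.

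The first key step is to upgrade this averaged tail bound into a bound that is uniform over the central band. For $(k,l)$ with $k\in I^n$ and $l\in J^n$, the local central limit theorem for the (binomial/multinomial) pair $(N_b,N_{ab})$ yields $P(N_b=k,N_{ab}=l)\ge c'/n$ for some constant $c'>0$, uniformly over the band; hence $1-\beta_{k,l}\le n^{1-\alpha\ln n}/c'$. Because $I^n$ and $J^n$ each contain $O(\sqrt n)$ integers, there are only $O(n)$ such pairs, so a union bound shows that the event $D^n$ on which $E[f(k+1,l)-f(k,l)\mid\mathcal{F}_k^{(l)}]\ge\Delta$ holds simultaneously for all $k\in I^n$ and $l\in J^n$ satisfies $P((D^n)^c)=O(n^{2-\alpha\ln n})\to0$.

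The second key step converts the lower bound on conditional increments into a lower bound on the actual slopes. Fix $l\in J^n$ and integers $x<y$ in $I^n$. Altering a single letter changes the optimal alignment score by at most a constant $M$, so the process $M_k:=f(k,l)-f(x,l)-\sum_{j=x}^{k-1}E[f(j+1,l)-f(j,l)\mid\mathcal{F}_j^{(l)}]$ is a martingale (started at $M_x=0$) with increments bounded by $2M$. On $D^n$ the sum of conditional increments is at least $\Delta(y-x)$, so the bad-slope event $\{f(y,l)-f(x,l)<\tfrac{\Delta}{2}(y-x)\}$ forces $M_y<-\tfrac{\Delta}{2}(y-x)$. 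By the Azuma--Hoeffding Theorem \ref{Azuma}, the probability of this is at most $\exp\!\left(-\Delta^2(y-x)/(32M^2)\right)\le\exp\!\left(-\Delta^2 c_1\sqrt n/(32M^2)\right)$, using $y-x\ge c_1\sqrt n$.

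Finally, since the complement of $H^n$ is the union of the bad-slope events over the $O(n^{3/2})$ admissible triples $(x,y,l)$, union-bounding and adding $P((D^n)^c)$ gives
$$P((H^n)^c)\le O(n^{2-\alpha\ln n})+O(n^{3/2})\exp\!\left(-\frac{\Delta^2 c_1\sqrt n}{32M^2}\right)\longrightarrow0,$$
which proves $P(H^n)\to1$. I expect the main obstacle to be the first step: passing from the single averaged bound \eqref{lalala} to uniform control of $\beta_{k,l}$ across the entire central band. This hinges on the local-CLT lower estimate $P(N_b=k,N_{ab}=l)=\Theta(1/n)$ being valid \emph{uniformly} in $(k,l)$ over the band, and on checking that the resulting factor $O(n^2)$ from the union bound is comfortably dominated by the superpolynomial decay $n^{-\alpha\ln n}$; the martingale concentration in the later steps, by contrast, is routine once the bounded-difference constant $M$ is in hand.
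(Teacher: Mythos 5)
Your proposal is correct and follows essentially the same route as the paper's proof: your event $D^n$ is the paper's $H^n_I$ (obtained by the same conditioning trick $P(A\mid B)\le P(A)/P(B)$ with the local CLT bound $P(N_b=k,N_{ab}=l)\ge c/n$ and a union bound over the $O(n)$ central pairs), and your martingale $M_k$ with Azuma--Hoeffding over the $O(n^{3/2})$ triples $(x,y,l)$ is exactly the paper's $H^n_{II}$ argument. The only differences are cosmetic, such as starting the Doob decomposition at $x$ rather than $0$ and tracking the increment bound as $2M$ instead of $|S|$.
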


\begin{proof}
Let $H^n_I(k,l)$ be the event that the conditional expected change in optimal alignment
score  when we align $X(k,l)$ with $Y$ is at least $\Delta$. Here we talk
about the change induced by switching  a randomly chosen $a$ into a $b$ in the string $X(k,l)$
or the string $Y(k,l)$.
If $(\tilde{X}(k,l),\tilde{Y}(k,l))$ denotes the randomly modified string pair
$(X(k,l),Y(k,l))$,
then by our definition of $f(.,.)$, we have $f(k+1,l)$ is the optimal alignment score
of $\tilde{X}(k+1,l)$ and $\tilde{Y}(k+1,l)$. Furthermore, $f(k,l)$ denotes the optimal alignment score
of $X(k,l)$ with $Y(k,l)$. Now  formally, the event $H^n(k,l)$ holds when
$$E[f(k+1,l)-f(k,l)|X(k,l),Y]\geq \Delta$$
which is the same as:
$$E[\tilde{L}_n(S)-L_n(S)|X=X(k,l),Y]\geq \Delta$$
or equivalently
\begin{equation}
\label{heinrichIII}
E[\tilde{L}_n(S)-L_n(S)|X,Y,N_b=k,N_{ab}=l]\geq \Delta.
\end{equation}
To understand why the last two inequalities above are equivalent, recall that
the distribution of $(X(k,l),Y(k,l))$ is the same as the distribution of $(X,Y)$ 
conditional on $N_b=k$ and $N_{ab}=l$. 
For the probability of Inequality \eqref{heinrichIII} above, if we would not have
also conditional on $N_b=k$ and $N_{ab}=l$,  we would have the bound
on the right side of \eqref{lalala} available. By how much can a small probability increase
by conditing? Let us take any too events $A$ and $B$. We have
$$P(A|B)=\frac{P(A \cap B)}{P(B)}\leq \frac{P(A)}{P(B)}.$$
So, by conditioning on an event $B$, the probability of any event $A$
increases by at most a factor $1/P(B)$. This leads to
\begin{align}
P(H^{nc}_I(k,l))&=P(E[\tilde{L}_n(S)-L_n(S)|X,Y,N_b=l,N_{ab}=l]< \Delta)\nonumber\\
&\leq\frac{P(E[\tilde{L}_n(S)-L_n(S)|X,Y]< \Delta)}{P(N_b=k,N_{ab}=l)}\label{chair}
\end{align}
Let now $H^n_I$ denote the event:
$$H^n_I=\cap_{k\in I^n,l\in J^n}H^n_I(k,l)$$
so that
\begin{equation}
\label{ionel}
P(H^{nc}_I) \leq \sum_{k\in I^n,l\in J^n} P(H^{nc}_I(k,l)).
\end{equation}
By the assumption of the present lemma that is Equation \eqref{lalala}, we have  the probability
that the following inequality holds
$$E[\tilde{L}_n(S)-L_n(S)|X,Y]< \Delta,$$
is below $n^{-\alpha n}$. Also, by the Local Central Limit Theorem,
we have that there exists a constant $c>0$ not depending on $n$, $k$ or $l$,
so that for all $k\in I^n$ and $l\in I^n$, we have:
$$P(N_b=k,N_{ab}=l)\geq\frac{c}{n}.$$ Applying this and condition \ref{lalala} to inequality \ref{chair},
we find that for $k\in I^n$ and $l\in I^n$, we have:
$$P(H^{nc}_I(k,l))\leq n^{-\alpha n}\cdot n/c=n^{-\alpha n+1}/c.$$
We can now use the last inequality above with 
inequality \ref{ionel}, to find
\begin{equation}
\label{ionel2}
P(H^{nc}_I) \leq 4c_2n^{-\alpha n+2}/c,
\end{equation}
where we used the fact that the number of integer couples $(k,l)$ with
$k\in J^n$ and $l\in I^n$ is $4c_2n$.
Let $M(k,l)$ denote the value:
$$M(k,l)=\sum_{i=0}^{k-1}\left(f(i+1,l)-E[f(i+1,l)|X(i,l),Y(i,l)]\right)\;+f(0,l).$$
Clearly when we hold $l$ fixed, then $M(.,l)$ is a Martingale.

Let $H_{II}^n(x,y,l)$ denote the event that  we have that 
$$|M(y,l)-M(x,l)|\leq 0.5|x-y|\Delta$$
By Hoeffding's Inequality for Martingales, $P(H_{II}^n(x,y,l)$ has high probability, 
\begin{equation}
\label{merde}
P(H_{II}^{nc}(x,y,l))\leq 2\exp(-0.5\Delta^2|x-y|/|S|^2)
\end{equation}
Here $|S|$ denotes the maximum change in value of the scoring function
when we change one letter, 
$$|S|=\max_{c,d,e\in\mathcal{A}^*}|S(c,d)-S(c,e)|.$$
Note that when we change only one letter in a string then the optimal alignment
score changes by at most $|S|$. Since, to obtain $f(k+1,l)$ from $f(k,l)$
we change only one letter, we have that $|f(k+1,l)-f(k,l)|\leq |S|$ always.
This also implies that $|M(k+1,l)-M(k,l)|\leq |S|$ always, which is what we used
to apply Hoeffding inequality.

Now, let
$$H_{II}^n$$ denote the event that $H_{II}^n(x,y,l)$ holds for all $x<y$ with
$|x-y|\geq c_1\sqrt{n}$ and $x,y\in J^n$ and $l\in I^n$.
Then
\begin{equation}
\label{sabi}
P(H_{II}^{nc})\leq \sum_{x,y\in J^n,l\in I^n}P(H_{II}^{nc}(x,y,l))
\end{equation}
where for the sum on the right side of the last equation above is taken
over $|x-y|\geq c_1\sqrt{n}$. The number of triplets $(x,y,l)$ in the sum
on the right side of \ref{sabi} is less than $8c_2^2n^{1.5}$. This bound
together with \eqref{merde} implies
\begin{equation}
\label{sabi2}
P(H_{II}^{nc})\leq  16c_2^2n^{1.5}\exp(-2\Delta^2\sqrt{n}/|S|^2)
\end{equation}
Note that
$$f(k,l)=M(k,l)+\sum_{i=0}^{k-1}E[f(i+1,l)-f(i,l)|X(i,l),Y(i,l)]$$
so that
\begin{equation}
\label{above}
f(y,l)-f(x,l)=M(y,l)-M(x,l)+\sum_{i=x}^{y-1}E[f(i+1,l)-f(i,l)|X(i,l),Y].
\end{equation}
Assume now that $l\in J^n$. Then, when the event $H_I^n$ holds,  the sum of conditional expectations
on the right side of Equation \eqref{above}
 is at least $|y-x|\Delta$. Furthermore when the event $H_{II}^n$ holds
and $|y-x|\geq c_1\sqrt{n}$, then
$$|M(y,l)-M(x,l)|\leq 0.5\Delta|x-y|.$$
It follows looking at \ref{above}, that when both $H^n_I$ and $H^n_{II}$ hold,
and $y-x\geq c_1\sqrt{n}$, that
$$f(y,l)-f(x,l)\geq 0.5|x-y|\Delta$$
This is the condition in the definition of the event $H^n$.
Hence, we have that $H^n_I$ and $H^n_{II}$ together imply $H^n$:
$$H^n_I\cap H^n_{II}\subset H^n$$
and hence
\begin{equation}
\label{HIHII}
P(H^{nc})\leq P(H^{nc}_I)+P(H^{nc}_{II}).
\end{equation}
From the bounds \eqref{sabi2} and \eqref{ionel2}
it follows that  $P(H^{nc}_I)$ and $P(H^{nc}_{II})$ both go to $0$
as $n\rightarrow\infty$. So, because of Equation
\eqref{HIHII}, we find that
$P(H^{nc})$ also goes to $0$ as $n\rightarrow\infty$.
This concludes the proof.
\end{proof}

According to Theorem \ref{samuti}, we have that  $\lambda(S)-\lambda(S-\epsilon T)>0$
implies a positive biased effect
of the random change on the optimal alignment score.
But by lemma\ref{alterhut}, a positive biased effect on the optimal alignment score
implies the fluctuation order: 
\begin{equation}
\label{VAR}
VAR[L_n]=\Theta(n).
\end{equation} Hence,
inequality $\lambda(S)-\lambda(S-\epsilon T)>0$ implies
the fluctuation order given by equation \ref{VAR}.
This is the content of the next theorem:

\begin{theorem}
\label{canary}
Let $S:\mathcal{A}^*\times\mathcal{A}^*\rightarrow\mathbb{R}$
be a scoring function on the finite alphabet $\mathcal{A}$. Let
$T:\mathcal{A}^*\times\mathcal{A}^*\rightarrow\mathbb{R}$ be defined
as 
$$T(a,c)=T(c,a):=S(b,c)-S(a,c)$$
for any $c\in\mathcal{A}^*$ with $c\neq a$ and $T(d,c)=0$ whenever $d\neq a$. \
Furthermore, let $T(a,a)=2(S(b,a)-S(a,a))$. Let $\epsilon>0$.
 If
\begin{equation}
\label{mimi}
\lambda(S)-\lambda(S-\epsilon T)>0,
\end{equation}
then
\begin{equation}\label{fluctfluct}
VAR[L_n(S)]=\Theta(n).
\end{equation}
\end{theorem}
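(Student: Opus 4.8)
The plan is to derive Theorem~\ref{canary} by composing the two results that precede it: Theorem~\ref{samuti}, which converts the hypothesis $\lambda(S)-\lambda(S-\epsilon T)>0$ into a high-probability lower bound on the conditional biased effect $E[\tilde L_n(S)-L_n(S)\mid X,Y]$, and Lemma~\ref{alterhut}, which turns such a biased effect into the fluctuation order $VAR[L_n(S)]=\Theta(n)$. Since the function $T$ appearing in Theorem~\ref{canary} is defined exactly as the $T$ of Theorem~\ref{samuti}, the two statements share the same setup and can be chained directly; the only work is to match the quantitative constants so that the conclusion of one feeds the hypothesis of the other without loss of rate.

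First I would note that, under assumption~\eqref{mimi}, the quantity
$$\kappa:=\frac{\lambda(S)-\lambda(S-\epsilon T)}{\epsilon\cdot p_a}$$
is a fixed, strictly positive real number independent of $n$, because $\lambda(S)$, $\lambda(S-\epsilon T)$, $\epsilon$, and $p_a$ are all determined once $S$, $T$, $\epsilon$, and the letter $a$ are fixed. Next I would apply Theorem~\ref{samuti} with the specific slack $\delta=\kappa/2>0$: it then provides a constant $\alpha>0$, not depending on $n$, such that for all $n$ large enough
$$P\!\left(E[\tilde L_n(S)-L_n(S)\mid X,Y]\geq \tfrac{\kappa}{2}\right)\geq 1-n^{-\alpha\ln(n)}.$$
Setting $\Delta:=\kappa/2>0$, this is precisely hypothesis~\eqref{lalala} of Lemma~\ref{alterhut}, with the same tail rate $1-n^{-\alpha\ln(n)}$. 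Invoking Lemma~\ref{alterhut} then yields the lower bound $VAR[L_n(S)]\geq cn$ for some constant $c>0$; combining this with the $O(n)$ upper bound furnished by McDiarmid's inequality (recalled in the introduction) gives $VAR[L_n(S)]=\Theta(n)$, which is the claim~\eqref{fluctfluct}.

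Since both ingredients are already proved, there is no substantive analytic obstacle left; the entire difficulty is bookkeeping. The one point that genuinely needs care is to choose the slack $\delta$ strictly below $\kappa$, so that $\Delta=\kappa-\delta$ stays a positive constant independent of $n$, and to verify that the exponential-type tail $1-n^{-\alpha\ln(n)}$ delivered by Theorem~\ref{samuti} is exactly the form demanded by Lemma~\ref{alterhut}. The choice $\delta=\kappa/2$ resolves both issues at once, so no sharpening of either prior result is required.
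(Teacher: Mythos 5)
Your proposal is correct and follows essentially the same route as the paper: the paper's own proof also applies Theorem~\ref{samuti} with the slack $\delta=\frac{\lambda(S)-\lambda(S-\epsilon T)}{2\epsilon p_a}$ (your $\kappa/2$), feeds the resulting high-probability bound into Lemma~\ref{alterhut} with $\Delta=\kappa/2$, and concludes $VAR[L_n(S)]=\Theta(n)$. Your explicit remark that the matching $O(n)$ upper bound comes from McDiarmid's inequality is a small tidying-up of a point the paper leaves implicit, but it is not a different argument.
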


\begin{proof} When 
\begin{equation}\label{hIII}
\lambda(S)-\lambda(S-\epsilon T)>0,
\end{equation}
Theorem \ref{samuti} shows that with high probability the random change has a 
biased effect on the optimal
alignment score. By Lemma \ref{alterhut}, this biased effect
then implies the order of the fluctuation \eqref{fluctfluct}. Let us present further details about this 
argument: Theorem \ref{samuti} implies that Inequality \eqref{h2} follows from \eqref{hIII}.
Let $\delta>0$ be taken as follows,  
$$\delta:=\frac{\lambda(S)-\lambda(S-\epsilon T)}{2\epsilon\cdot p_a},$$
so that Inequality \eqref{h2} becomes 
\begin{equation}\label{whish}
P\left(E[\tilde{L}_n(S)-L_n(S)|X,Y]\geq 
\frac{\lambda(S)-\lambda(S-\epsilon T)}{2\epsilon p_a} \right)\geq 1-n^{-\alpha n}.
\end{equation}

Since, $\lambda(S)-\lambda(S-\epsilon T)$  is strictly positive,
Lemma \ref{alterhut} implies then the desired order of fluctuation,
that is:
$$VAR[L_n(S)]=\Theta(n).$$
We have thus shown that condition \eqref{mimi} implies \eqref{fluctfluct}.
\end{proof}

In many situations the last theorem is very practical tool for verifying the fluctuation order 
\eqref{fluctfluct}. By Montecarlo simulation we can now estimate 
the value for $\lambda(S)$ and $\lambda(S-\epsilon T)$ and test the positivity of the 
quantity $\lambda(S)-\lambda(S-\epsilon T)$ at a given confidence level $\beta$. In case it 
is positive on the chosen confidence level, it follows from Theorem \ref{canary}
that we will also be $\beta$-confident that the fluctuation order \eqref{fluctfluct}
applies. In other words, we check if Inequality \eqref{mimi}
holds at a certain confidence level that will in practice depend on the available 
computational power. In this fashion we can verify for many scoring functions
that $VAR[L_n(S)]=\Theta(n)$ up to a certain confidence level!

\section{Proof of Theorem \ref{samuti} }\label{Theproof}
In order to prove Theorem \ref{samuti},
we need to show that as soon as
$$\lambda(S)-\lambda(S-\epsilon T)>0$$
holds,  we get with high probability
a positive lower bound for the expected effect
of the random change of one letter onto the optimal
alignment score. That lower bound for
$$E[\tilde{L}_n(S)-\tilde{L}_n(S)|X,Y]$$
is as ``close as we want''
(but maybe sligthly below), the following expression, 
$$\frac{\lambda(S)-\lambda(S-\epsilon T)}{\epsilon\cdot p_a}.$$
To prove this, we introduce three events $A^n(S)$, $B^n(S)$
and $C^(\delta)$. We then show in Lemma \ref{combinatoriallemma},
that the three events $A^n(S)$, $B^n(S)$ and $C^n(\delta)$
mutually imply the desired lower bound on the expected change
in optimal alignment score. We then go on to prove that
the events $A^n(S)$, $B^n(S)$ and $C^n(\delta)$ all have 
high probability. This then implies that our lower bound
for the expected change in optimal alignment score
must also hold with high probability. 

So far we traced out a way to prove Theorem \ref{samuti}. Let us now look at 
the details: Let $A^n(S)$ be the event that
$$\frac{L_n(S)}{n}\geq \lambda(S)-\frac{\ln(n)}{\sqrt{n}}.$$

\noindent Let $B^n(S)$ be the event that
$$\frac{L_n(S-\epsilon T)}{n}\leq \lambda(S-\epsilon T)+\frac{\ln(n)}{\sqrt{n}}$$

\noindent For any number $\delta>0$, let $C^n(\delta)$ be the event that
$$\frac{N^n_a}{n}\leq p_a+\frac{\delta\ln n}{\sqrt{n}},$$
where as before $p_a$ is the probability:
$$p_a:=P(X_i=a)=P(Y_i=a).$$

The main combinatorial idea in this paper is given below. It shows
that  the events $A^n(S)$, $B^n(S)$ and $C^n(\delta)$ together
imply the desired lower bound on the expected change of the optimal alignment score
when we change an $a$ into $b$:

\begin{lemma}
\label{combinatoriallemma}Let $\epsilon>0$ be a constant, and assume that 
$$\lambda(S)-\lambda(S-\epsilon T)>0.$$ Let 
$\delta,\delta_1>0$ be any two small constants not depending on $n$.
When $A^n$, $B^n$ and $C^n(\delta_1)$ all hold simultaneously,
then for all $n$ large enough, we have:
$$E[\tilde{L}_n(S)-L_n(S)|X,Y]\geq \frac{\lambda(S)-\lambda(S-T\epsilon)}{\epsilon p_a}-\delta.$$
(How large $n$ needs to be for the above inequality to hold, depends on $\epsilon,\delta,\delta_1,p_a$).
\end{lemma}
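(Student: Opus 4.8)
The plan is to turn the informal chain \eqref{uno}--\eqref{quattro} sketched in the introduction into a rigorous \emph{deterministic} lower bound that holds on every realization lying in $A^n\cap B^n\cap C^n(\delta_1)$, and then to let $n\to\infty$ so that the $\ln(n)/\sqrt n$ error terms carried by the three events wash out against the fixed positive gap $\lambda(S)-\lambda(S-\epsilon T)$.

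First I would fix, for the given realization $(X,Y)$, one optimal alignment $\pi$ for the scoring function $S$, so that $S_\pi(X,Y)=L_n(S)$. The combinatorial heart of the argument is that $\pi$ is merely a set of index pairs and therefore remains a feasible alignment of the perturbed strings $(\tilde X,\tilde Y)$; hence $S_\pi(\tilde X,\tilde Y)\le \tilde L_n(S)$, and subtracting $S_\pi(X,Y)=L_n(S)$ and taking the conditional expectation over the random choice of the flipped letter gives, via the exact identity \eqref{tildeQ*2}, inequality \eqref{uno}, namely $E[\tilde L_n(S)-L_n(S)|X,Y]\ge T_\pi(X,Y)/N_a$. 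I would then invoke linearity of the alignment score in the scoring function to write $\epsilon\,T_\pi(X,Y)=S_\pi(X,Y)-(S-\epsilon T)_\pi(X,Y)$ as in \eqref{duo}, and use that $\pi$ is optimal for $S$ but only feasible (hence sub-optimal) for $S-\epsilon T$ to obtain \eqref{tres}, i.e. $\epsilon\,T_\pi(X,Y)\ge L_n(S)-L_n(S-\epsilon T)$. Combining these yields the deterministic bound
$$E[\tilde L_n(S)-L_n(S)|X,Y]\ \ge\ \frac{1}{\epsilon}\cdot\frac{L_n(S)-L_n(S-\epsilon T)}{N_a},$$
valid for every realization with $N_a\ge 1$.

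Next I would substitute the information carried by the three events. On $A^n$ and $B^n$ the numerator is controlled by $L_n(S)-L_n(S-\epsilon T)\ge n\bigl(\lambda(S)-\lambda(S-\epsilon T)-2\ln(n)/\sqrt n\bigr)$, and on $C^n(\delta_1)$ the denominator satisfies $N_a\le n\bigl(p_a+\delta_1\ln(n)/\sqrt n\bigr)$. Since $\lambda(S)-\lambda(S-\epsilon T)>0$ is a fixed constant while $2\ln(n)/\sqrt n\to 0$, the numerator lower bound is strictly positive once $n$ is large; this positivity is exactly what licenses dividing the positive numerator bound by the positive denominator upper bound without reversing the inequality, giving
$$E[\tilde L_n(S)-L_n(S)|X,Y]\ \ge\ \frac{1}{\epsilon}\cdot\frac{\lambda(S)-\lambda(S-\epsilon T)-2\ln(n)/\sqrt n}{\,p_a+\delta_1\ln(n)/\sqrt n\,}.$$
Letting $n\to\infty$, the right-hand side converges to $\bigl(\lambda(S)-\lambda(S-\epsilon T)\bigr)/(\epsilon p_a)$, so for every fixed $\delta>0$ it exceeds $\bigl(\lambda(S)-\lambda(S-\epsilon T)\bigr)/(\epsilon p_a)-\delta$ for all $n$ large enough, with the threshold depending only on $\epsilon,\delta,\delta_1,p_a$ and on the fixed gap, as claimed.

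I expect the only genuinely delicate point to be the first step: one must apply the deterministic expansion \eqref{tildeQ*2} to the \emph{fixed} optimal alignment $\pi$ — a function of $(X,Y)$ alone, not of the perturbation — \emph{before} taking the conditional expectation, and one must ensure $N_a\ge 1$ so that the division is legitimate. The latter is in fact automatic on $A^n\cap B^n$ for $n$ large, since a strictly positive numerator $L_n(S)-L_n(S-\epsilon T)$ forces $T_\pi(X,Y)>0$ through \eqref{duo}--\eqref{tres}, and hence the presence of at least one letter $a$ to flip. The remaining steps — linearity, optimality, and the final limit — are routine.
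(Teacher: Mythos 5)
Your proposal is correct and follows essentially the same route as the paper's own proof: fix an alignment $\pi$ optimal for $S$, combine the identity \eqref{tildeQ*2} with linearity and the sub-optimality of $\pi$ for $S-\epsilon T$ to get $E[\tilde{L}_n(S)-L_n(S)|X,Y]\geq \frac{1}{\epsilon}\cdot\frac{L_n(S)-L_n(S-\epsilon T)}{N_a^n}$, then feed in the bounds from $A^n$, $B^n$, $C^n(\delta_1)$ and let the $\ln(n)/\sqrt{n}$ terms vanish. The only (harmless) differences are in the order of substitution, and your explicit remarks on $N_a^n\geq 1$ and on the sign condition needed when replacing $N_a^n$ by its upper bound, which the paper leaves implicit.
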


\begin{proof}Assume that $A^n(S)$ holds. Then, any optimal alignment
$\pi$ of $X=X_1\ldots X_n$ and $Y=Y_1\ldots Y_n$ satisfies
\begin{equation}
\label{I}
\frac{S_\pi^n}{n}\geq \lambda(S)-\frac{\ln(n)}{\sqrt{n}}
\end{equation}
When $B^n$ holds, then
\begin{equation}
\label{mimi2}
\frac{(S-\epsilon T)_\pi^n}{n}\leq \lambda(S-\epsilon T)+\frac{\ln(n)}{\sqrt{n}}.
\end{equation}
By linearity, however
$$(S-\epsilon T)^n_\pi=S^n_\pi-\epsilon T^n_\pi.$$
The last equation together with inequality \ref{mimi2} leads to:
\begin{equation}
\label{II}
\frac{S_\pi^n-\epsilon T_\pi^n}{n}\leq \lambda(S-\epsilon T)+\frac{\ln(n)}{\sqrt{n}}
\end{equation}
Subtracting Equation \eqref{I} from \eqref{II},
we find 
\begin{equation}
\label{inin}
\frac{\lambda(S)-\lambda(S-\epsilon T)}{\epsilon}-\frac{2\ln(n)}{\epsilon\sqrt{n}}\leq 
\frac{T_\pi^n}{n}
\end{equation}
Now from Equality \eqref{tildeQ*2}, we know that when  changing
a randomly chosen  $a$ into a $b$, the expected effect onto the alignment score
of $\pi$   is  $T^n_\pi/N_a^n$. (Here $N_a^n$ denotes the total number
of $a$'s in the string $X_1X_2\ldots X_n$ and $Y_1\ldots Y_n$ combined). Since $\pi$ is an optimal alignment
according to the scoring function $S$, the expected increase of the alignment score
of $\pi$ is a lower bound for the expected increase of the optimal alignment score.
Hence, the expected increase in optimal
alignment score is at least $T^n_\pi/N_a^n$. (We don't necessarily have equality 
for the change in optimal alignment score, but only a lower bound. The reason
is that we could have another alignment which becomes optimal after we change
a letter.) So, since $T^n_\pi/N_a^n$ is a lower bound for the expected increase
in optimal alignment score, multiplying Inequality \eqref{inin}
by $n/N_a^n$,
we obtain the following lower bound on the expected alignment score change, 
\begin{equation}\label{apple}
E[\tilde{L}_n(S)-L_n(S)|X,Y]\geq 
\frac{n}{N_a^n}\cdot
\left(\frac{\lambda(S)-\lambda(S-\epsilon T)}{\epsilon}-\frac{2\ln(n)}{\epsilon\sqrt{n}} \right)
\end{equation} 
When the event $C^n(\delta_1)$ holds, we find that:

$$\frac{n}{N^n_a}\geq \frac{1}{p_a}\cdot \frac{1}{1+\frac{\delta_1 \ln(n)}{p_a\sqrt{n}}}$$
which we apply to Inequality \eqref{apple} to obtain:
$$E[\tilde{L}_n(S)-L_n(S)|X,Y]\geq
\left(\frac{\lambda(S)-\lambda(S-\epsilon T)}{p_a\epsilon}-\frac{2\ln(n)}{\epsilon p_a\sqrt{n}} \right)
\left( \frac{1}{1+\frac{\delta_1\ln (n)}{p_a\sqrt{n}}}\right)
.$$
From the last inequality above
it follows by continuity, that for all $n$ large enough
$$E[\tilde{L}_n(S)-L_n(S)|X,Y]
\geq
\frac{\lambda(S)-\lambda(S-\epsilon T)}{\epsilon\cdot p_a}-\delta,
$$
as soon as $\delta>0$ does not depend on $n$. We used the fact that $\epsilon>0$, $\delta_1$, $\delta$
and $p_a$ do not depend on $n$. (So how large $n$ needs be depends on $\epsilon$, $\delta$, $\delta_1$
and $p_a$).
\end{proof}

In the next lemma we prove that the event $A^n(S)$ has probability close to $1$,
when $n$ is taken large:

\begin{lemma}For all $n$ large enough, we have that
$$P(A^n(S))\geq 1-n^{-\alpha_1\ln(n)},$$
where $\alpha_1=1/(8|S|^2)$,
and $|S|:=\max_{c,d,e\in \mathcal{A}^*}|S(c,d)-S(c,e)|$.
\end{lemma}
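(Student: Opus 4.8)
The plan is to recognize $A^n(S)^c$ as a lower-tail deviation of $L_n(S)$ from its mean and to control it by a bounded-difference (Azuma--Hoeffding) estimate. Unwinding the definition, $A^n(S)^c$ is the event
\[
L_n(S) < n\lambda(S) - \sqrt{n}\ln(n).
\]
Setting $r_n := n\lambda(S) - E[L_n(S)]$, this rewrites as $L_n(S) - E[L_n(S)] < r_n - \sqrt{n}\ln(n)$, so the whole argument reduces to showing that $t_n := \sqrt{n}\ln(n) - r_n$ is positive and of order $\sqrt{n}\ln(n)$, and then bounding $P\!\left(L_n(S)-E[L_n(S)] \le -t_n\right)$.

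First I would pin down the sign and size of $r_n$. The block-diagonal (concatenation) alignment is always feasible and its score is additive, so $L_{m+n}(S) \ge L_m(S) + L_n(S)$ in distribution; taking expectations, $E[L_n(S)]$ is superadditive, and Fekete's lemma (the convergence argument defining $\lambda(S)$) gives $E[L_n(S)]/n \le \lambda(S)$, i.e.\ $r_n \ge 0$. Moreover the convergence-rate estimate of Alexander (and the bound proved in the Appendix) yields $r_n = O(\sqrt{n\ln n}) = o(\sqrt{n}\ln n)$. Hence, for all $n$ large enough,
\[
t_n = \sqrt{n}\ln(n) - r_n \ge \tfrac{1}{\sqrt{2}}\,\sqrt{n}\ln(n) > 0,
\]
so that $A^n(S)^c = \{L_n(S) - E[L_n(S)] \le -t_n\}$ is a genuine lower-tail event at deviation $t_n$ of order $\sqrt{n}\ln n$.

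Next I would apply the tail bound. View $L_n(S)$ as a function of the $2n$ independent letters $X_1,\dots,X_n,Y_1,\dots,Y_n$. Changing one letter alters exactly one column of any fixed alignment, and since $S$ is symmetric this changes $S_\pi(X,Y)$ by at most $|S| = \max_{c,d,e\in\mathcal{A}^*}|S(c,d)-S(c,e)|$; reusing the old optimal alignment in the perturbed strings (and conversely) shows that $L_n(S)$ itself has bounded differences with constant $|S|$ in each of its $2n$ coordinates, so $\sum_i c_i^2 = 2n|S|^2$. The Azuma--Hoeffding inequality (Theorem \ref{Azuma}) then gives
\[
P(A^n(S)^c) = P\!\left(L_n(S) - E[L_n(S)] \le -t_n\right) \le \exp\!\left(-\frac{t_n^2}{4n|S|^2}\right).
\]
Substituting $t_n \ge \tfrac{1}{\sqrt{2}}\sqrt{n}\ln n$ gives $t_n^2/(4n|S|^2) \ge (\ln n)^2/(8|S|^2)$, whence $P(A^n(S)^c) \le \exp\!\left(-(\ln n)^2/(8|S|^2)\right) = n^{-\alpha_1\ln(n)}$ with $\alpha_1 = 1/(8|S|^2)$, which is exactly the claim.

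The conceptual content is light; the care lies in the constants. The bounded-difference verification is routine once one observes that each letter sits in a single column. The delicate point is matching $\alpha_1 = 1/(8|S|^2)$: this forces one to (i) invoke the rate bound to guarantee $r_n = o(\sqrt{n}\ln n)$, so that essentially the full deviation $\sqrt{n}\ln n$ is available and $t_n \ge \tfrac{1}{\sqrt 2}\sqrt{n}\ln n$ for large $n$, and (ii) keep track of the factor $2n$ from the number of coordinates and the denominator $2\sum_i c_i^2 = 4n|S|^2$ in Azuma--Hoeffding. The main obstacle is therefore not the probabilistic mechanism but ensuring the convergence-rate estimate is strong enough (strictly $o(\sqrt n \ln n)$) and tracking the Azuma constant so that the stated $\alpha_1$ emerges cleanly.
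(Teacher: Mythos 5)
Your proof is correct and follows essentially the same route as the paper's: both recenter the event using the Appendix rate-of-convergence bound (so the gap $n\lambda(S)-E[L_n(S)]$ is $O(\sqrt{n\ln n})=o(\sqrt{n}\ln n)$) and then apply the bounded-differences inequality in the $2n$ letters with constant $|S|$ to the remaining deviation of order $\sqrt{n}\ln n$. The only differences are bookkeeping: the paper keeps deviation $0.5\sqrt{n}\ln n$ and a slightly different form of the McDiarmid constant, while you keep $\tfrac{1}{\sqrt{2}}\sqrt{n}\ln n$ with the Azuma-form denominator $2\sum_i c_i^2$, and both choices land on the same $\alpha_1=1/(8|S|^2)$.
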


\begin{proof} Note that by Lemma \ref{rateoflambdan}, there exists a constant $c>0$ not depending on $n$,
such that for all $n$ large enough the following inequality
holds:
$$\lambda(S)-\lambda_n(S)\leq \frac{c\sqrt{\ln(n)}}{\sqrt{n}}.
$$
Hence,
\begin{equation}
\label{ambada}
\lambda(S)-\lambda_n(S)-\frac{\ln(n)}{\sqrt{n}}\leq \frac{c\sqrt{\ln(n)}}{\sqrt{n}}-\frac{\ln(n)}{\sqrt{n}}
\leq -\frac{0.5\ln(n)}{\sqrt{n}}
\end{equation}
where the last inequality above holds for $n$ large enough.
Now the event $A^n(S)$ holds exactly when the following inequality is true:
\begin{equation}
\label{oscar}
\frac{L_n(S)}{n}\geq \lambda_n+(\lambda(S)-\lambda_n(S))-\frac{\ln(n)}{\sqrt{n}}.
\end{equation}
The very right side of inequality \ref{ambada}, is an upper bound  for
expression 
$$\lambda(S)-\lambda_n(S)-\frac{\ln(n)}{\sqrt{n}}.$$
 In an inequality giving a lower (non-random) bound
for a random variable, when you replace the lower bound by something bigger,
 the probability (of the inequality) increases. Hence the probability or Inequality \eqref{oscar},
is bigger than the probability
of
\begin{equation}
\label{oscarII}
\frac{L_n(S)}{n}\geq \lambda_n-\frac{0.5\ln(n)}{\sqrt{n}}.
\end{equation} 
This means, that since Inequality \eqref{oscar} is equivalent to the event $A^n(S)$,
that
\begin{equation}\label{AnS}
P(A^n(S))\geq P\left(\frac{L_n(S)}{n}\geq \lambda_n-\frac{0.5\ln(n)}{\sqrt{n}} \right).
\end{equation}
We can now apply McDiarmid's Inequality -- see Lemma \ref{Azuma} -- to the probability on the right-hand 
side of the last inequality to find 
\begin{align}
P\left(\frac{L_n(S)}{n}\geq \lambda_n-\frac{0.5\ln(n)}{\sqrt{n}} \right)
=P\left(L_n(S)-E[L_n(S)]\geq -(2n)\Delta\right)\geq\\\label{ula}
\geq 1-\exp(-(2n)\Delta^2/|S|^2)
\end{align}
where $\Delta=0.25\ln(n)/\sqrt{n}$. We remark that McDiarmid's Inequality is applicable because 
$L_n(S)$ depends on $2n$ i.i.d.\ entries with the property that changing only one entry
affects $L_n(S)$ by at most $|S|$. 

With our definition of $\Delta$ we find that the expression on the very right of
Inequality \eqref{ula} is equal to 
\begin{equation}\label{trivia}
\exp(-(2n)\Delta^2/|S|^2)=\exp(-(\ln(n))^2/8|S|^2)=n^{-\alpha_1 \ln(n)}
\end{equation}
where $\alpha_1=1/(8|S|^2)$. The three equations \eqref{trivia},
\eqref{ula} and \eqref{AnS} jointly imply 
$$P(A^n(S))\geq 1-n^{-\alpha_1 \ln(n)}$$
where $\alpha>0$ is defined by:
$$\alpha_1=\frac{1}{8|S|^2}.$$
\end{proof}

The next lemma shows the high probability of the event $B^n(S)$.

\begin{lemma}
for all $n$ large enough, the following bound holds, 
$$P(B^n(S))\geq 1-n^{-\alpha_2 n}$$
where $\alpha_2:=1/a^2$ and 
$a:=\max_{c,d,e\in \mathcal{A}^*}|S(c,d)-S(c,e)+\epsilon T(c,d)-\epsilon T(c,e)|$.
\end{lemma}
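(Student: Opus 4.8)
The event $B^n(S)$ controls the \emph{upper} tail of the optimal score for the scoring function $S-\epsilon T$, so the quantity to bound is
$$P\big(B^{nc}(S)\big)=P\Big(L_n(S-\epsilon T) > n\,\lambda(S-\epsilon T)+\sqrt{n}\,\ln n\Big).$$
To reach a bound whose exponent grows linearly in $n$, as the target $n^{-\alpha_2 n}$ demands, the plan is to treat $L_n(S-\epsilon T)=\max_\pi (S-\epsilon T)_\pi(X,Y)$ as a maximum and to estimate each alignment separately before taking a union bound; a single bounded-difference inequality applied to $L_n(S-\epsilon T)$ as a whole only sees deviations on the scale $\sqrt n\,\ln n$ and so would not produce a linear exponent.

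First I would fix a combinatorial alignment $\pi$ and exploit its independence structure. Because $\pi$ sends each index of $X$ and each index of $Y$ into exactly one column, the column scores making up $(S-\epsilon T)_\pi(X,Y)$ depend on disjoint blocks of the i.i.d.\ letters, are therefore independent, and each takes values in an interval of length at most $a=\max_{c,d,e}|(S-\epsilon T)(c,d)-(S-\epsilon T)(c,e)|$. Thus $(S-\epsilon T)_\pi(X,Y)$ is a sum of at most $2n$ independent variables with ranges bounded by $a$. By the superadditivity underlying the definition of $\lambda$, $E[(S-\epsilon T)_\pi(X,Y)]\le E[L_n(S-\epsilon T)]\le n\,\lambda(S-\epsilon T)$, so writing $\gamma_\pi:=\lambda(S-\epsilon T)-E[(S-\epsilon T)_\pi(X,Y)]/n\ge 0$ for the margin of $\pi$ below the optimum, a Chernoff/Hoeffding estimate (Lemma \ref{Azuma}) gives
$$P\Big((S-\epsilon T)_\pi(X,Y)\ge n\,\lambda(S-\epsilon T)+\sqrt n\,\ln n\Big)\le \exp\!\Big(-\frac{\big(n\gamma_\pi+\sqrt n\,\ln n\big)^2}{n\,a^2}\Big).$$

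Second, I would sum this over all alignments. Their number is at most $\sum_k\binom{n}{k}^2=\binom{2n}{n}\le 4^n$, so the union bound yields
$$P\big(B^{nc}(S)\big)\le \sum_\pi \exp\!\Big(-\frac{\big(n\gamma_\pi+\sqrt n\,\ln n\big)^2}{n\,a^2}\Big),$$
and the target $n^{-\alpha_2 n}$ with $\alpha_2=1/a^2$ follows once each per-alignment exponent is shown to exceed $n\ln 4+\alpha_2 n\ln n$, since then the sum of the $\le 4^n$ terms is at most $4^n e^{-n\ln 4-\alpha_2 n\ln n}=n^{-\alpha_2 n}$.

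The main obstacle is precisely this reconciliation between the number of alignments and the per-alignment tail. Alignments with a fixed positive margin $\gamma_\pi\ge\gamma_0$ contribute an exponent of order $n\gamma_0^2/a^2$, which for $\gamma_0$ large enough outweighs the factor $4^n$ and is the unproblematic regime; the difficulty is the \emph{near-optimal} alignments, those with $\gamma_\pi=O(\ln n/\sqrt n)$, whose individual exponent degrades to $O((\ln n)^2/a^2)$ and cannot by itself absorb an exponential count. The crux is therefore a structural estimate showing that the near-optimal alignments are few enough --- subexponentially many, or organised so that their margins lift the effective deviation up to the scale $n\sqrt{\ln n}$ that the Hoeffding exponent converts into $n\ln n/a^2$ --- so that their total contribution remains at most $n^{-\alpha_2 n}$. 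Once this margin/count trade-off is established and combined with the bound for the well-separated alignments, one obtains $P(B^n(S))\ge 1-n^{-\alpha_2 n}$ for all $n$ large enough.
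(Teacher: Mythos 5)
You have been misled by a typo in the statement: the exponent $n^{-\alpha_2 n}$ should read $n^{-\alpha_2\ln(n)}$, as the paper's own proof concludes (``$P(B^n(S))\geq 1-n^{-\alpha_2\ln(n)}$'') and as the downstream use in Theorem \ref{samuti} requires (only $1-n^{-\alpha\ln(n)}$ is claimed there, and the final combination in Section \ref{Theproof} bounds the error probabilities by $n^{-\alpha_1\ln(n)}+n^{-\alpha_2\ln(n)}+n^{-2\ln(n)}$). Taking the literal exponent at face value, you correctly observed that a single bounded-difference inequality cannot reach it, and you were driven to a union bound over all $\leq 4^n$ alignments. But the ``crux'' you leave open --- the margin/count trade-off for near-optimal alignments --- is not merely unproven, it is unclosable: your union bound can never be smaller than its largest single term, and for any alignment $\pi$ with $\gamma_\pi=O(\ln n/\sqrt{n})$ that term is $\exp\bigl(-O((\ln n)^2)/a^2\bigr)$, vastly larger than $n^{-\alpha_2 n}=\exp(-\alpha_2\, n\ln n)$. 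Worse, the literal statement is false in any non-degenerate case: whenever $\lambda(S-\epsilon T)$ is strictly below the maximal per-column score, the event that both strings consist of a score-maximizing letter has probability at least $\rho^{2n}$ for some $\rho>0$ and forces $L_n(S-\epsilon T)\geq n\lambda(S-\epsilon T)+\sqrt{n}\ln n$ for large $n$, so $P(B^{nc}(S))\geq e^{-cn}\gg n^{-\alpha_2 n}$. No structural estimate on the set of near-optimal alignments can rescue a bound that the event itself violates.

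The paper's actual proof is precisely the one-line route you dismissed at the outset. By subadditivity, $\lambda_n(S-\epsilon T)\leq\lambda(S-\epsilon T)$, so the event $B^n(S)$ contains the event $\bigl\{L_n(S-\epsilon T)-E[L_n(S-\epsilon T)]\leq 2n\Delta\bigr\}$ with $\Delta=\ln(n)/(2\sqrt{n})$; one application of McDiarmid's inequality (Lemma \ref{Azuma}) to $L_n(S-\epsilon T)$ as a function of the $2n$ i.i.d.\ letters, each of which changes the score by at most $a$, gives $P(B^{nc}(S))\leq\exp\bigl(-(\ln n)^2/a^2\bigr)=n^{-\alpha_2\ln(n)}$. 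Your independence observation for the column scores of a fixed $\pi$ and your bound $E[(S-\epsilon T)_\pi(X,Y)]\leq n\lambda(S-\epsilon T)$ are both fine, but they are machinery for a target that neither holds nor is needed.
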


\begin{proof}
A simple subadditivity argument shows that
\begin{equation}
\label{saba}
\lambda_n(S-\epsilon T)\leq \lambda(S-\epsilon T).
\end{equation}
If we change in the definition of the event $B^n(S)$ the upper bound by
something smaller, we  get a lower probability. Hence, because of 
inequality \ref{saba},
we obtain that
\begin{equation}
\label{wie}
P(B^n(S))\geq P\left(\frac{L_n(S-\epsilon T)}{n}\leq \lambda_n(S-\epsilon T)+\frac{\ln(n)}{\sqrt{n}}\right)
\end{equation}
The right side of equation \ref{wie}
is  equal to
\begin{equation}
\label{wie2}
P\left(L_n(S-\epsilon T)-E[L_n(S-\epsilon T)]\leq (2n)\Delta\right)
\end{equation}
where 
$$\Delta=\frac{\ln(n)}{2\sqrt{n}}.$$
We can  apply McDiarmid's Inequality  -- see Lemma \ref{Azuma} -- to the probability given
in \ref{wie2}. We find that \ref{wie2} is greater or equal to
\begin{equation}
\label{wie3}1-\exp(-2(2n)\Delta^2/a^2)=1-\exp(-(\ln(n))^2/a^2)=1-n^{-\ln( n)/a^2}
\end{equation}
where $a^2$ is equal to $1/\alpha_2$. The constant $\alpha_2$ is defined in the statement
of the lemma.

Combining  \eqref{wie3}, \eqref{wie2} and \eqref{wie}, we finally obtain the required inequality 
$$P(B^n(S))\geq 1-n^{-\alpha_2\ln(n)}.$$
\end{proof}

The next lemma shows that the event $C^n(\delta)$ holds with high probability. 

\begin{lemma}
Let $\delta>0$ be a constant. We have that
$$P(C^n(\delta))\geq 1-n^{-2\ln n}.$$
\end{lemma}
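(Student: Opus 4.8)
The plan is to handle $C^n(\delta)$ with exactly the McDiarmid argument (Lemma \ref{Azuma}) already used for $A^n(S)$ and $B^n(S)$ in the two preceding lemmas. The only structural differences are that the random variable of interest is now the letter-count $N^n_a$ rather than an optimal alignment score, and that its sensitivity to a single coordinate is $1$ rather than $|S|$: altering one of the underlying i.i.d.\ letters changes the number of $a$'s by at most one, so the bounded-difference constant needed to invoke McDiarmid is simply $1$.

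First I would record that $N^n_a$ is a sum of independent $\Ber(p_a)$ random variables, one for each relevant letter, so that $E[N^n_a]=p_a n$, i.e.\ $E[N^n_a]/n=p_a$ (this is also the normalization implicitly used downstream in Lemma \ref{combinatoriallemma}). Consequently the event $C^n(\delta)$ is precisely the one-sided deviation event
$$N^n_a-E[N^n_a]\leq \delta\sqrt{n}\,\ln(n),$$
and its complement $C^{nc}(\delta)$ is the event $N^n_a-E[N^n_a]>\delta\sqrt{n}\,\ln(n)$. Applying McDiarmid's inequality with sensitivity $1$ in each of the independent coordinates and deviation $t=\delta\sqrt{n}\,\ln(n)$ then yields
$$P\left(C^{nc}(\delta)\right)\leq \exp\!\left(-\frac{2t^{2}}{n}\right)=\exp\!\left(-2\delta^{2}(\ln n)^{2}\right)=n^{-2\delta^{2}\ln(n)}.$$
Taking complements gives $P(C^n(\delta))\geq 1-n^{-2\delta^{2}\ln(n)}$, which is of the advertised form $1-n^{-\alpha\ln(n)}$ and collapses to the stated bound once the constant in the exponent is absorbed as in the sibling lemmas.

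There is no genuine obstacle here; the computation follows the same template as the two previous lemmas, and the Gaussian-type exponent $2t^{2}/n$ conveniently reduces to a multiple of $(\ln n)^{2}$ precisely because the deviation scale in the definition of $C^n(\delta)$ is $\delta\sqrt{n}\,\ln(n)$. The only point deserving a little care is the bookkeeping of constants: one must use the per-coordinate sensitivity $1$ (not $|S|$), and one should note that the decay rate $n^{-\alpha\ln(n)}$ with $\alpha$ depending on $\delta$ is all that the downstream argument (Lemma \ref{combinatoriallemma}, where $C^n(\delta_1)$ is combined with $A^n(S)$ and $B^n(S)$) actually requires.
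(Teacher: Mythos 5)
Your proof is correct and follows essentially the same route as the paper: both rewrite $C^n(\delta)$ as the one-sided deviation event $N^n_a-E[N^n_a]\leq \delta\sqrt{n}\,\ln(n)$ and apply McDiarmid's inequality with per-coordinate sensitivity $1$. The only difference is that you retain the factor $\delta^2$ in the exponent, obtaining $1-n^{-2\delta^2\ln(n)}$, whereas the paper silently sets $\delta=1$ and claims $1-n^{-2\ln(n)}$; for $\delta<1$ your version is the accurate one, and as you observe, a bound of the form $1-n^{-\alpha\ln(n)}$ with $\alpha>0$ depending only on $\delta$ is all that the downstream argument requires.
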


\begin{proof}
The event $C^n(\delta)$ is equivalent to
the following inequality:
$$N^n_a-E[N^n_a]\leq \Delta\cdot n$$
where
$$\Delta:=\frac{\ln n}{\sqrt{n}}.$$
by McDiarmid's Inequality, we thus have 
$$P(C^n(\delta))\geq 1-\exp(-2\Delta^2\cdot n)=1-n^{-2\ln n},
$$
as claimed. 
\end{proof}

Let $\delta>0$ not depend on $n$. Lemma \ref{combinatoriallemma} shows that 
when the events $A^n(S)$, $B^n(S)$ and
$C^n(\delta)$ jointly hold, then for $n$ large enough, we have:
\begin{equation}\label{expectedtilde}
E[\tilde{L}_n(S)-L_n(S)|X,Y]\geq \frac{\lambda(S)-\lambda(S-T\epsilon)}{\epsilon p_a}-\delta.
\end{equation}
Hence, Equation \eqref{expectedtilde} holds with high probability, because the events
$A^n(S)$, $B^n(S)$ and $C^n(\delta)$ all hold with high probability. More precisely,
we get:
\begin{align}
\label{refrefref}
P\left(\;E[\tilde{L}_n(S)-L_n(S)|X,Y]\geq \frac{\lambda(S)-\lambda(S-T\epsilon)}{\epsilon p_a}-\delta\right)\geq\\
\geq 1-P(A^{nc}(S))+P(B^{nc}(S))+P(C^{nc}(\delta))
\end{align}
But, by the last three lemma's above, the sum of probabilities
$$P(A^{nc}(S))+P(B^{nc}(S))+P(C^{nc}(\delta))$$
is bounded from above by
$$n^{-\alpha_1\ln(n)}+n^{-\alpha_2\ln(n)}+n^{-2\ln(n)}$$
which for $n$ large enough is bounded from above by
$$n^{-\alpha \ln(n)}$$
where $\alpha>0$ is any constant not depending on $n$
and strictly smaller than $\alpha_1$, $\alpha_2$ and $2$.
So, from Inequality \eqref{refrefref}, we obtain that for all $n$ large enough:
$$
P\left(\;E[\tilde{L}_n(S)-L_n(S)|X,Y]\geq \frac{\lambda(S)-\lambda(S-T\epsilon)}{\epsilon p_a}-\delta\right)\geq
 1-n^{-\alpha n},$$
where $\alpha>0$ does not depend on $n$. This completes the proof of Theorem \ref{samuti}.

\section{The case with the 4 letter genetic alphabet}

\paragraph{Changing a $C$ or $G$ into $A$ or $T$:} We consider here the genetic alphabet $\{A,T,C,G\}$. In this case $A$ and $T$ can mutate easily into
each other. Same thing for $C$ and $G$. But to go from one of these two groups into the other is more difficult.
This implies that when we want to change a letter from the group $\{A,T\}$ into
a letter from the group $\{C,G\}$, we get more heavily punished by the score. Furthermore, 
in the humane genome  the letters $A$ and $T$ have higher frequency than $C$ and $G$. 
We still take $X=X_1X_2\ldots X_n$ and $Y=Y_1Y_2\ldots Y_n$
to be i.i.d. sequences. 
We  consider a model where the probabilities of $A$ and $T$ are equal to each other so that
$$P(X_i=A)=P(Y_i=A)=P(X_i=T)=P(Y_i=T)$$
and the probabilities of $G$ and $C$ are equal to each other:
$$P(X_i=C)=P(Y_i=C)=P(X_i=G)=P(Y_i=G).$$
The random change we consider consists in choosing at random a $C$ or a $G$ and changing
it into a $A$ or a $T$. For this we pick  among all the $C$'s and $G$'s within $X$ and $Y$
one at random with equal probability. Then, we flip a fair coin to decide if
the randomly chosen letter becomes a $A$ or a $T$. Finally we chose the randomly picked
letter into a $A$ or a $T$ depending on the coin. The 
new strings obtained from this one letter change are denoted by $\tilde{X}$ and $\tilde{Y}$.
Hence, there is only one letter changed when going from $XY$ to $\tilde{X}\tilde{Y}$.
This letter is a $C$ or a $G$ which was turned into a $A$ or a $T$.

Again, we denote by $\tilde{L}_n(S)$, the optimal alignment score of $\tilde{X}$ and $\tilde{Y}$
according to $S$, 
$$\tilde{L}_n(S):=\max_\pi S_\pi(\tilde{X},\tilde{Y}),$$
where the maximum above is taken over all alignments with gaps $\pi$ of $\tilde{X}$ with $\tilde{Y}$.
The conditional expected change, as before, is the alignment score of 
a scoring function $T$, which has to be defined sligthly differently
from the previous case. 
We take $T$ as follows, 
for $U$ being equal to $C$ or $G$ and $V\in\{A,C,G,T,g\}$,
 we define first $T_X$, 
$$T_X(U,V):=0.5(S(A,V)-S(U,V))+0.5(S(T,V)-S(U,V)).$$
When $U$ is not equal to $C$ or $G$,
then let $T_X(U,V):=0$.

Similarly, we define $T_Y$ by
$$T_y(V,U):=0.5(S(V,A)-S(V,U))+0.5(S(T,T)-S(V,U)),$$
when $U$ is equal to $C$ or $G$ and $V\in\{A,C,G,T,g\}$. 
Otherwise, we take $T_Y:=0$.
Finally we define $T$ as the sum of $T_X$ and $T_Y$:
$$T=T_X+T_Y.$$
With this definition of $T$,
 the conditional expected change in alignment-score
 $S$ equals the alignment score of $T$ up to a factor.
This is the same principal as the one leading to Equation \eqref{tildeQ*2}.
Hence,  for any alignment $\pi$ of $X$ and $Y$,
the following holds true, 
\begin{equation}
E[S_\pi(\tilde{Y},\tilde{X})-S_\pi(X,Y)|X,Y]=
\label{ENCG}\frac{T_\pi(X,Y)}{N_{C,G}}, 
\end{equation}
where $N_{C,G}$ represents the total number of $C$ and $G$'s present in both $X$ and $Y$.
As usual, $T_\pi(X,Y)$ represents the score of the alignment $\pi$,
when using the scoring function $T$ instead of $S$. Also, $\pi$ is supposed
to align $X=X_1X_2\ldots X_n$ with $Y_1Y_2\ldots Y_n$.

Note that as $n\rightarrow\infty$,
we have
$$\frac{n}{N_{C,G}}\rightarrow \frac{1}{2(p_C+p_G)}=\frac{1}{4p_C}.$$
Hence, in Theorem \ref{samuti} in equation \ref{h2}, we need to replace $p_a$ by
$2(p_C+p_G)$ where $p_c:=P(X_i=C)=P(Y_i=C)$ and $p_G=P(X_i=G)=P(Y_i=G)$.

With these notations, Theorem \ref{samuti} and Lemma \ref{alterhut} remain valid
provided we change $p_a$ by $2(p_C+p_G)$ in equation \ref{h2}. In other words,
in this case also we just have to verify that
$\lambda(S)-\lambda(S-\epsilon T)>0$ to get the variance order
$$VAR[L_n(S)]=\Theta(n).$$
Theorem \ref{samuti} is proved the same way as in the previous case.
So, we leave it to the reader.
The only change is that we start with Equation \eqref{ENCG}, rather than
\eqref{tildeQ*2}. Then one can follow the same steps.
For Lemma \ref{alterhut}, the situation is easier than is is with the change
$a\rightarrow b$ in an alphabet with more than $2$ letters.
Actually, the proof is very similar to the one done in \cite{VARTheta}.
We thus only outline the proof:
when we look at the proof of Lemma \ref{alterhut}, we have two variables:
$N_{ab}$ and $N_b$. In that proof, we condition on $N_{ab}$ and let $N_b$ vary to proof
the fluctuation order. For the genetic alphabet case, we don't need two variables
but only one. So, $N_{A,T}$ will denote the total number of $C$ and $G$'s
counted in both the string $X$ and $Y$. This variable $N_{C,G}$ corresponds
to $N_b$ in the other case). There is no need of another variable
(like $N_{ab}$).  So, we will generate a random sequence of string-pairs:
$$(X(0),Y(0)),(X(1),Y(1)),\ldots,(X(k),Y(k)),\ldots,(X(2n),Y(2n)).$$
The sequences $X(0)$ and $Y(0)$ are i.i.d sequences independent of each other
which contain only the letters $C$ and $G$. Those letters are taken
equiprobable. Then we chose any letter and change it into an $A$ or a $T$.
To decide whether it is $A$ or $T$ we flip a fair coin. We proceed by induction
on $k$: once $(X(k),Y(k))$ is obtained, we chose any $C$ or $G$ in 
$X(k), Y(k)$ and change it to $A$ or $T$. Among all $C$ and $G$'s in both strings
we chose with equal probability. In other words we apply the random change
$\tilde{}$. This means that
our recursive relation is:
$$(X(k+1),Y(k+1))=(\tilde{X}(k),\tilde{Y}(k)).$$
Note that with this definition, the total number of $A$ and $T$'s
in $X(k)$ and $Y(k)$ combined is exactly $k$. Given, that constrain,
all possibilities are equally likely for $(X(k),Y(k))$. This is to
say, that the probability distribution of $(X(k),Y(k))$
is the same as $(X,Y)$ conditional on $N_{A,T}=k$:
$$\mathcal{L}(X(k),Y(k))=\mathcal{L}(X,Y|N_{A,T}=k).$$
So, if we produce the string-pairs $(X(k),Y(k))$ independently of
$N_{A,T}$, then we obtain that
$$(X(N_{A,T}),Y(N_{A,T}))$$
has the same distribution as $(X,Y)$. So, among other, the fluctuation
of the optimal alignment score must be equal as well
\begin{equation}
\label{oesel}
VAR[S(X(N_{A,T}),Y(N_{A,T}))]=VAR[S(X,Y)]=VAR[L_n(S)].
\end{equation}
(Here $S(X(N_{A,T}),Y(N_{A,T})$ denotes the optimal alignment score
of the strings $X(N_{A,T})$ and $Y(N_{A,T})$. Similarly $S(X,Y)$ denotes
the optimal alignment score of $X$ and $Y$.)
so, if we denote
$S(X(k),Y(k))$ by $f(k)$, equation \ref{oesel} becomes
\begin{equation}
\label{Spring}
VAR[f(N_{AT})]=VAR[L_n(S)].
\end{equation}
Now, assume that the random change has typically
a biased effect on the alignment score as given in Equation
\eqref{lalala} in Lemma \ref{alterhut}. We have that $f(k+1)$ is obtained
from $f(k)=S(X(k),Y(k))$ by applying the random change.
So, if \eqref{lalala} holds, that that expected random change typically
should be above $\Delta>0$. So typically,
$$E[f(k+1)-f(k)|X(k),Y(k)]\geq \Delta$$
where $\Delta>0$ does not depend on $k$. 
In other words, $f(.)$ behaves ``like a biased random walk''. And on a certain
scale, has a slope which , with high probability is at least $\Delta$.
But, assume that $g$ is a non-random function with slope at least $\Delta$
Then for any variable $N$, it is shown in \cite{bonettolcs}
that
$$VAR[g(N)]\geq \Delta^2\, VAR[N]$$
So, we can apply this to our case, Take $g$ equal to $f$ and $N$ equal to $N_{AC}$.
We get that when Inequality \eqref{lalala} holds, then
\begin{equation}
\label{mimina}
VAR[f(N_{AT})]\geq \Delta\, VAR[N_{AC}]=\Delta^24ncp_{AC}(1-p_{AC})
\end{equation}
where $c>0$ is a constant not depending on $n$.
Here, the constant $c$ had to be introduced, because
$f$ is random and is not everywhere having a slope of at least $\Delta$
but only with high probability and on a certain scale.
We also used the fact that $N_{AC}$ is a binomial variable with parameters $2n$
and $P(X_i\in\{A,X\})$.
Combining now \eqref{mimina} with \eqref{Spring},
we finally obtain the desired result
$$VAR[L_n(S)]\geq \Delta^24ncp_{AC}(1-p_{AC})$$
and hence
$$VAR[L_n(S)]=\Theta(n).$$

\section{Determining when $\lambda(S)-\lambda(S-\epsilon T)>0$ using simulations}
\label{monte}

Recall that $X_1X_2\ldots X_n$ and
$Y_1Y_2\ldots Y_n$ are two i.i.d.
sequence independent of each other.
Also recall that
$$L_n(R)$$ designates the optimal alignment score of $X_1\ldots X_n$ and
$Y_1\ldots Y_n$ according to the scoring function $R$. Furthermore,
we saw that $L_n(R)/n$ converges to a finite number as $n\rightarrow\infty$
which we denote by
$\lambda_R$, so that
$$\lambda_R:=\lim_{n\rightarrow\infty}\frac{L_n(R)}{n}$$\\
We know by Theorem \ref{canary}, that when 
\begin{equation}
\label{heino}
\lambda(S)-\lambda(S-\epsilon T)>0,
\end{equation}
the fluctuation of the optimal alignment score is linear in $n$,
that is, 
\begin{equation}
\label{teserra}
VAR[L_n(S)]=\Theta(n).
\end{equation}
So,  we can run a Montecarlo simulation, and estimate
the quantity on the left-hand side of \eqref{heino}. If the estimate is positive,
this is an indication that the left side of \ref{heino} is positive too
and that \eqref{teserra} holds. We can even go one step further and actually test on a certain
significance level if inequality \eqref{heino} is satisfied.
If it is on a significance level $\beta>0$, we are then $\beta$-confident
that the order of the fluctuation is as given in inequality \eqref{teserra}.
In this way, we are able to verify up to a certain confidence level
that the fluctuation size of the optimal alignment score is linear
in $n$. We manage to do so for several realistic scoring functions.

To estimate the expression on the right-hand side of \eqref{heino}, we simply use
$(L_n(S)-L_n(S-\epsilon T))/n$. (Note that as $n$ goes to infinity our estimate
goes to $\lambda(S)-\lambda(S-\epsilon T)$.) To do this, we  draw two sequences of length $n$
at random:
$$X=X_1\ldots X_n$$ 
and 
$$Y=Y_1\ldots Y_n.$$

We then take the optimal alignment score
of $X$ and $Y$ according to $S$ which is $L_n(S)$. Next, we calculate the optimal alignment score
of $X$ and $Y$ according to $S-\epsilon T$ which yields $L_n(S-\epsilon T)$. Finally,
we  subtract the
two and divide by $n$ so as to get our estimate of the left side Inequality \eqref{heino},
\begin{equation}
\label{estimate}
\hat{\lambda}(S)-\hat{\lambda}(S-\epsilon T) =\frac{L_n(S)-L_n(S-\epsilon T)}{n}.
\end{equation}
When our estimate is positive, it makes it seem likely that 
Inequality \eqref{heino} is satisfied. We need to ask ourselves however
how big the estimate needs to be, to guarantee that \eqref{heino} holds
up to a high enough confidence level.

When our estimate is positive, we determine at which confidence level
\eqref{heino} holds. Assume that the value reached by our estimate 
is $x$. (So, after one simulation, $x$ designates the numerical value taken
by \eqref{estimate}.)  For the confidence level, we need  an upper bound on the probability
that the estimate reaches the value $x$ if in reality $\lambda_S-\lambda_{S-\epsilon T}$
was negative. The confidence level is then, one minus this probability. 

Let us go through the calculation. First we denote by $E_n$ the following expectation:
$$E_n:=\frac{E[L_n(S)]-E[L_n(S-\epsilon T)]}{n}.$$
We have that
\begin{align}
P&\left(\frac{L_n(S)-L_n(S-\epsilon T)}{n}\geq x\right)\nonumber\\
&\quad=P\left(\frac{L_n(S)-L_n(S-\epsilon T)}{n}-E_n\geq x-E_n\right)\label{hilda1}\\
\label{hilda2}
&\quad\leq P\left(\frac{L_n(S)-L_n(S-\epsilon T)}{n}-E_n\geq x-E_n+(\lambda(S)-\lambda(S-\epsilon T))\right),
\end{align}
where the last inequality above was obtained because we make the assumption that
$\lambda(S)-\lambda(S-\epsilon T)<0$.  
Now,
\begin{equation}
\label{stupidstudent}
 -E_n+(\lambda(S)-\lambda(S-\epsilon T))
=\lambda(S)-\frac{L_n(S)}{n}-\left(\lambda(S-\epsilon T)-\frac{L_n(S-\epsilon T)}{n} \right)
\end{equation}
by subadditivity we have that 
\begin{equation}
\label{little}
\lambda(S)-\frac{L_n(S)}{n}\geq 0.
\end{equation}
In the appendix, Lemma \ref{rateoflambdan} allows us to bound
from above the quantity:
$$\lambda(S-\epsilon T)-\frac{L_n(S-\epsilon T)}{n}$$
by the bound:
\begin{equation}
\label{yesyesyes}
c_n|S-\epsilon T|\cdot\frac{\sqrt{\ln(n)}}{\sqrt{n}},
\end{equation}
where
$$c_n=\sqrt{\frac{2\ln3+2\ln(n+2)}{\ln(n)}}.$$
(Note that we leave out  the term $\frac{2|S|_*}{n}$ which appears in inequality
\ref{difference}. This term is of an order to small to be practically relevant.)
Using now the upper bound \ref{yesyesyes} and inequality \eqref{little} with \eqref{stupidstudent}
in \eqref{hilda1} and \eqref{hilda2}, we finally find
\begin{multline}
\label{carter}
P\left(\frac{L_n(S)-L_n(S-\epsilon T)}{n}\geq x\right)\\\leq
P\left(\frac{L_n(S)-L_n(S-\epsilon T)}{n}-E_n\geq x-c_n|S-\epsilon T|\cdot\frac{\sqrt{\ln(n)}}{\sqrt{n}}\right).
\end{multline}
We can now use Azuma-Hoeffding Inequality (see  Lemma \ref{Azuma} in Appendix)
to bound the probability on the right side of inequality \ref{carter}. As a matter of fact, when we change one of the $2n$
i.i.d. entries (which are $X_1...X_n$ and $Y_1\ldots Y_n$), the term
$$L_n(S)-L_n(S-\epsilon T)$$
changes by at most a quantity
$$|S|+|S-\epsilon T|,$$
where, as before, $|R|$ denotes the msaximum change in aligned letter pair score when one changes on letter
with a scoring function $|R|$,
$$|R|:=\max_{c,d,e\in \mathcal{A}^*}|R(c,d)-R(c,e)|.$$
So,  applying Lemma \ref{Azuma} to the right side expression of \eqref{carter}, we find
\begin{equation}
\label{rara}
P\left(\frac{L_n(S)-L_n(S-\epsilon T)}{n}\geq x\right)\leq \exp(-n\Delta^2/(|S|+|S-\epsilon T|)^2),
\end{equation}
where
$$\Delta=x-c_n|S-\epsilon T|\cdot\frac{\sqrt{\ln(n)}}{\sqrt{n}}).$$
One minus the bound on the right side of \ref{rara} is how confident we are
that $\lambda(S)-\lambda(S-\epsilon T)$ is not negative. Of course, for this to make sense,
we need to to first check that the value of the estimate $x$ is above
$c_n|S-\epsilon T|\cdot\sqrt{\ln(n)}/\sqrt{n}$.\\
In what follows, $S$ refers to the substitution matrix:
$$(S(i,j))_{i,j\in \mathcal{A}},$$
which is obtained from the scoring function $S$. (Basically the matrix $S$, is just a way of writing
the scoring function $S:\mathcal{A}\times \mathcal{A}\rightarrow \mathbb{R}$ in matrix form.) Also,
in all the examples we investigated we took the gap penalty to be the same for all letters:
this means that aligning any letter with a gap has the same score not depending on which letter
gets aligned with the gap. We denote by $\delta$ the gap penalty, that is
$$\delta :=-S(c,G)$$
where the expression on the right side of the above equality in the situation examine numerically in
this paper does not depend on which letter 
$c\in \mathcal{A}$ we consider. ( Recall that  $G$ denotes the symbol used for a gap).\\

Let us quickly explain the situation for which we verified through Montecarlo-simulation that with a high confidence level
$\lambda(S)-\lambda(S-\epsilon T)>0$ for a $\epsilon>0$:
\begin{enumerate}

\item{}The first situation is the same as the first except that we
 we change a $0$ into $1$ in  the sequences $X$ and then another $0$ into $1$ in $Y$.
So the random change consists of two letters changed. This then yields the  matrix $T$
to be
 $$T_2:=
\left(\begin{array}{cc}
-4&2\\
2&0
\end{array}
\right)$$
everything else remains the same.
\item{} Another situation is the DNA-alphabet $\{A,T,C,G\}$. In this case $A$ and $T$ can mutate easily into
each other. Same thing for $C$ and $G$. But to go from one of these two groups into the other is more difficult.
This implies that when we want to change a letter from the group $\{A,T\}$ into
a letter from the group $\{C,G\}$, we get more heavily punished by the score.
This can be seen the default substitution matrix used by Blastz:
$$S_{BLASTZ}=S_{BL}=
\left(
\begin{array}{c|cccc}
  &A   &T   &C   &G\\\hline
A &91  &-31 &-114&-123\\
T &-31 &100 &-125&-114\\
C &-114&-125&100 &-31\\
G &-123&-114&-31 &91
\end{array}
\right)
$$
In humane genome  the letters $A$ and $T$ have higher frequency than $G$ and $C$.
We took $A$ and $T$ together to both have frequency $0.4$ and $G$ and $C$ to each have frequency
 $0.1$.  With these choices and a gap penalty of $800$ we obtained the desired result.
The random change for this is defined as follows:\\
we pick one $C$ or   $G$ in any of the two sequences $X$ and $Y$. That is we consider
all $C$'s and all $G$'s appearing in both $X$ and $Y$ and with equal probability
just chose one such letter. Then we flip a fair coin to decide if 
we change that symbol into a $A$ or a $T$ and then do the change accordingly.
The new strings are denoted by $\tilde{X}$, resp. $\tilde{Y}$. The difference
between $XY$ and $\tilde{X}\tilde{Y}$ is exactly one $C$ or $G$ which got turned into
a $A$ or a $T$.\\
The random-change matrix $T$ in that case is equal to:
$$T_{BLASTZ}=T_{BL}=
\left(
\begin{array}{c|cccc}
  &A   &T   &C   &G\\\hline
A &0  &0 &144&153\\
T &0 &0 &159.5&148.5\\
C &144&159.5&-439&-176\\
G &153&148.5&-176 &-419
\end{array}
\right)
$$
Note that the random change described here tends to increase the score since $C$ and $G$ are likely to be aligned
with $A$ or $T$ since there are more $A$ and $T$'s...
The BLASTZ default gap penalty is $400$, but for significantly 
determining that \ref{heino} holds, we need a higher gap penalty $\delta$ of $1200$.
\end{enumerate}

Let us summarize what we found in our simulations:
\begin{equation*}
\begin{array}{|c|c|c|}
\hline
\text{Case}& \text{I} & \text{II}\\
\hline
\text{Alphabet} & \{0,1\} & \{A,T,C,G\}\\
\hline
P(\cdot) & p_0=0.2,p_1=0.8 & p_A=0.4,p_T=0.4, p_C=0.1,p_G=0.1\\
\hline
S & id_2 & S_{BL}\\
\hline
T & T_2 & T_{BL}\\
\hline
\delta & 6 & 1200\\
\hline
n & 10^5 & 2\times 10^5\\
\hline
\epsilon & 0.5 & 0.9\\
\hline
\frac{L_n}{n} & 0.0634 & 15.197\\
\hline
\text{p-value} & 0.0102 & 2.4\times 10^{-4}\\
\hline
\end{array}
\end{equation*}

In the table above, $L_n$ designates our test statistic, 
\begin{equation*}
L_n=\frac{L_n(S)-L_n(S-\epsilon T)}{n}, 
\end{equation*}
and $\delta$ denotes the gap penalty. Now, the algorithm to 
find the optimal alignment score of two sequences of length $n$
is of order constant times $n^2$. So, our simulation to obtain $L_n$
with $n=100000$ ran overnight. but if one has more time, one could run
longer sequences and get even better results. For example, we use
the actual default matrix for BLASTZ, but then our gap penalty is $1200$
whilst the default is only $400$. In reality, when doing the simulations
with say a gap penalty of $600$ one always get $L_n$ to be positive.
But not positive enough to beat the theoretical our bound for  the difference
between $E[L_n]/n$ and the limit $\lambda(S)-\lambda(S-\epsilon T)$.
Now, there are known methods \cite{Paterson1},\cite{Paterson2}, \cite{martinezlcs}, \cite{lcscurve}, to find confidence bounds for $\lambda(S)$
which are way better than what we use here. (In this paper we simply
simulate two long sequences $X=X_1\ldots X_n$ and$Y=Y_1\ldots Y_n$
and then compute the optimal alignment scores for $S$ and $S-\epsilon T$.
The difference of the scores leads than to $L_n$.) So, using some of 
these advanced methods or running very long simulations, clearly in our opinion
will allow for proving the order 
\begin{equation}
\label{againagain}
VAR[L_n(S)]=Theta(n)
\end{equation} for even ``less extrem''
situations. For example, we expect that if the gap penalty is $600$
instead of $1200$ we still should manage to show \ref{againagain}.
Also, when the probabilities are even less biased, say $0.2,0.2,0.3,0.3$ instead
of $0.1,0.1,0.4.0.4$. Non the less, what we achieve in this article
is already quite remarkable, considering that in the article \cite{},
it takes for  binary-sequences, the probability of $1$ to be below
$10^{-12}$ for the technique to work!! Compare this with the probabilities
in this paper of $P(X_i=1)=0.2,P(X_i=0)=0.8$ for which we are able
to show that \ref{againagain} holds up to a high confidence level! 


\section{Appendix: Large Deviations}\label{appendix}

We denote by $L_S(x_1\dots x_i,y_1\dots y_j)$ the optimal alignment score of the strings
$x_1\ldots x_i$ with $y_1\ldots y_j$ according to the scoring function $S$. Also, recall the definition given in the first section: $L_n(S):=L_S(X_1\dots X_n, Y_1\dots Y_n)$ and $\lambda_n(S):=\expect[L_n(S)]/n$.
Furthermore, recall that  $\lambda_n(S)\rightarrow\lambda(S)$. In this appendix we will show a stronger result that quantifies the convergence rate as being of order $O(\sqrt{\ln n/n})$. For this purpose, we introduce the following notation, 
\begin{align*}
\|S\|_{\delta}&=\max_{c,d,e\in{\mathcal A}^*}\left|S\left(c,d\right)-S\left(c,e\right)\right|,\\
\|S\|_{\infty}&=\max_{c,d\in{\mathcal A}^*}\left|S\left(c,d\right)\right|,
\end{align*}

\begin{lemma}\label{change}
Let $x=x_1\dots x_m$ and $y=y_1\dots y_n$ be two given strings with letters from the alphabet ${\mathcal A}$, and let $S$ be a given scoring function. Let further $\hat{x}\in{\mathcal A}$, and consider two amendments of string $x$, $x^{[i]}=x_1\dots x_{i-1}\,\hat{x}\,x_{i+1}\dots x_m$, obtained by replacing an arbitrary letter $x_i$ by $\hat{x}$, and $x^{[+]}=x_1\dots x_m\,\hat{x}$, obtained by extending $x$ by a letter $\hat{x}$. Then the following hold true, 
\begin{align}
\left|L_S(x^{[i]},y)-L_S(x,y)\right|&\leq\|S\|_{\delta},\label{first claim}\\
\left|L_S(x^{[+]},y)-L_S(x,y)\right|&\leq\|S\|_{\infty}.\label{second claim}
\end{align}
\end{lemma}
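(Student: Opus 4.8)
The plan is to prove both inequalities by the standard two-sided comparison of optimal alignments: for each claim I take an optimal alignment of one of the two string pairs, reinterpret it as a (generally suboptimal) alignment of the other pair, bound the resulting change in score, and then run the argument in both directions to obtain the two-sided estimate. The underlying principle is that a single local modification of $x$ affects the alignment score of any \emph{fixed} alignment through only a bounded number of columns.

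For \eqref{first claim}, note first that $x^{[i]}$ and $x$ have the same length $m$, so every alignment $\pi$ of $x$ with $y$ is simultaneously an alignment of $x^{[i]}$ with $y$, and conversely (an alignment is just an increasing index sequence in $[1,m]\times[1,n]$, independent of the letter values). Fix an optimal alignment $\pi$ for $(x,y)$. Passing to $(x^{[i]},y)$ changes exactly one term of $S_\pi$, namely the contribution of position $i$: it is either $S(x_i,y_j)$ (if $i$ is aligned to some $y_j$) or $S(x_i,\gap)$ (if $i$ is aligned to a gap), and in either case it is replaced by $S(\hat x,y_j)$, respectively $S(\hat x,\gap)$. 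Using the symmetry of $S$ to fix the common second argument, the change is
\[
\bigl|S(\hat x,\cdot)-S(x_i,\cdot)\bigr|\le\|S\|_{\delta}.
\]
Hence $L_S(x^{[i]},y)\ge S_\pi(x^{[i]},y)\ge L_S(x,y)-\|S\|_{\delta}$. Exchanging the roles of $x$ and $x^{[i]}$ (starting from an optimal alignment for $(x^{[i]},y)$) gives the reverse bound, and \eqref{first claim} follows.

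For \eqref{second claim} I treat the two directions separately. For the lower bound, take an optimal alignment $\pi$ of $(x,y)$; since $x^{[+]}$ extends $x$ by a letter in the last position, $\pi$ is also an alignment of $(x^{[+]},y)$ in which the appended letter $\hat x$ is aligned to a gap, adding the single term $S(\hat x,\gap)$, so
\[
L_S(x^{[+]},y)\ge S_\pi(x^{[+]},y)=L_S(x,y)+S(\hat x,\gap)\ge L_S(x,y)-\|S\|_{\infty}.
\]
For the reverse bound I take an optimal alignment $\pi'$ of $(x^{[+]},y)$ and delete the last letter of $x^{[+]}$ to obtain an alignment of $(x,y)$. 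If $\hat x$ was aligned to a gap in $\pi'$, only the term $S(\hat x,\gap)$ disappears and the score drops by at most $\|S\|_{\infty}$. If instead $\hat x$ was aligned to some $y_j$, then deleting that column leaves $y_j$ aligned to a gap and the score drops by $S(\hat x,y_j)-S(\gap,y_j)$.

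The main obstacle is precisely this last case. The appended letter may ``capture'' a symbol $y_j$ that was previously paired with a gap, and the resulting one-sided difference $S(\hat x,y_j)-S(\gap,y_j)$ is a priori controlled only by $\|S\|_{\delta}$ (equivalently by $2\|S\|_{\infty}$) rather than by $\|S\|_{\infty}$; indeed this difference can be strictly larger than $\|S\|_{\infty}$ when the gap penalty is comparable in magnitude to the largest match score. This is exactly where the appending estimate genuinely differs from the replacement estimate of \eqref{first claim}, so the delicate point is to control the gap contribution in this capture case and to verify that the constant claimed for $\bigl|L_S(x^{[+]},y)-L_S(x,y)\bigr|$ is the intended one; the naive bounded-column argument only delivers a $\|S\|_{\delta}$-type constant here, and this step should be checked closely.
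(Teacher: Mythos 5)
Your proof of \eqref{first claim} and of the lower bound $L_S(x^{[+]},y)\geq L_S(x,y)-\|S\|_{\infty}$ is exactly the paper's argument: reuse an optimal alignment of one pair as a (suboptimal) alignment of the other pair, bound the change column by column, and symmetrize. Up to that point you and the paper agree completely.

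The ``capture'' obstacle you flag in the remaining direction is a genuine problem, and you were right not to paper over it: it is in fact an error in the lemma and in the paper's own proof. The paper crops the last aligned pair $(\hat{x},a)$ of an optimal alignment $\tilde{\pi}^{[+]}$ of $(x^{[+]},y)$ and asserts $S_{\tilde{\pi}}(x,y)=S_{\tilde{\pi}^{[+]}}(x^{[+]},y)-S(\hat{x},a)$; but when $a$ is a letter $y_j$ of $y$, the orphaned $y_j$ must be realigned with a gap, so the true identity is $S_{\tilde{\pi}}(x,y)=S_{\tilde{\pi}^{[+]}}(x^{[+]},y)-S(\hat{x},y_j)+S(\gap,y_j)$, and the one-sided change is only controlled by $\|S\|_{\delta}$, exactly as you observe. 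Indeed \eqref{second claim} as stated is false: take ${\mathcal A}=\{a\}$ with $S(a,a)=1$ and $S(a,\gap)=S(\gap,a)=-1$, so that $\|S\|_{\infty}=1$ and $\|S\|_{\delta}=2$; for $x=a$, $y=aa$, $\hat{x}=a$ one computes $L_S(x,y)=0$ (the unmatched letter of $y$ costs $-1$) but $L_S(x^{[+]},y)=2$, a jump of $2>\|S\|_{\infty}$. The statement your two cases (gap column versus capture) actually prove is
\begin{equation*}
-\|S\|_{\infty}\;\leq\; L_S(x^{[+]},y)-L_S(x,y)\;\leq\;\max\left(\|S\|_{\infty},\|S\|_{\delta}\right),
\end{equation*}
hence $\left|L_S(x^{[+]},y)-L_S(x,y)\right|\leq\max\left(\|S\|_{\infty},\|S\|_{\delta}\right)$, and this is the corrected form of \eqref{second claim}. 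The repair is harmless downstream: Lemma \ref{technical}, and through it Lemma \ref{rateoflambdan}, only need \emph{some} constant bound on the effect of adding or deleting a boundary letter, so replacing $\|S\|_{\infty}$ by $\max\left(\|S\|_{\infty},\|S\|_{\delta}\right)$ there changes nothing but the constant in the $O(1/n)$ term.
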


\begin{proof}
Let $\pi$ be an optimal alignment of $x$ and $y$, so that $S_{\pi}(x,y)=L_S(x,y)$, and denote the letter with which $x_i$ is aligned under $\pi$ by $a\in{\mathcal A}^*$. Then 
\begin{equation*}
L_S(x^{[i]},y)\geq S_{\pi}(x^{[i]},y)=S_{\pi}(x,y)-S(x_i,a)+S(\hat{x},a)
\geq L_S(x,y)-\|S\|_{\delta}. 
\end{equation*}
Applying the identical argument to an optimal alignment of $x^{[i]}$ and $y$, we obtain the analogous inequality 
\begin{equation*}
L_S(x,y)\geq L_S(x^{[i]},y)-\|S\|_{\delta}, 
\end{equation*}
so that \eqref{first claim} follows. 

For the second claim, let us use an optimal alignment $\pi$ of $x$ and $y$ to construct an alignment $\pi^{[+]}$ of $x^{[+]}$ and $y$ by appending an aligned pair of letters $(\hat{x},G)$, where 
$G$ denotes a gap. Then we have 
\begin{equation*}
L_S(x^{[+]},y)\geq S_{\pi^{[+]}}(x^{[+]},y)=S_{\pi}(x,y)+S(\hat{x},G)\geq L_S(x,y)-\|S\|_{\infty}. 
\end{equation*}
Conversely, we can amend an optimal alignment $\tilde{\pi}^{[+]}$ of $x^{[+]}$ and $y$ to become a valid alignment $\tilde{\pi}$ of $x$ and $y$ by cropping the last pair of aligned letters, $(\hat{x},a)$. We then have 
\begin{equation*}
L_S(x,y)\geq S_{\tilde{\pi}}(x,y)=S_{\tilde{\pi}^{[+]}}(x^{[+]},y)-S(\hat{x},{ a})\geq L_S(x^{[+]},y)-\|S\|_{\infty},
\end{equation*}
thus establishing \eqref{second claim}. 
\end{proof}

\begin{lemma}\label{rateoflambdan}
The convergence of $\lambda_n(S)$ to $\lambda(S)$ is governed by the inequality 
\begin{equation}\label{difference}
\lambda_n(S)\leq\lambda(S)\leq\lambda_n(S)+ c_n\|S\|_{\delta}\frac{\sqrt{\ln n}}{\sqrt{n}}+\frac{2\|S\|_{\infty}}{n},\quad\forall\,n\in\N,
\end{equation}
where 
\begin{equation*}
c_n:=\sqrt{\frac{2\ln3+2\ln(n+2)}{\ln(n)}}.
\end{equation*}
\end{lemma}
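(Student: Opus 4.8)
The statement splits into two inequalities, of which the left-hand one, $\lambda_n(S)\le\lambda(S)$, is the easy half. Aligning the two pieces of a concatenated pair of strings separately and optimally on each piece shows that $\expect[L_{m+n}(S)]\ge\expect[L_m(S)]+\expect[L_n(S)]$, so $n\mapsto\expect[L_n(S)]$ is superadditive and Fekete's lemma gives $\lambda(S)=\sup_m\lambda_m(S)\ge\lambda_n(S)$; this is the same subadditivity remark already invoked for $S-\epsilon T$ in the main text. All the real work is therefore in the right-hand inequality, which quantifies how fast $\lambda_n(S)$ climbs to this supremum.

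For the upper bound the plan is to compare a long optimal alignment against a grid of length-$n$ blocks, cutting it at the $X$-block boundaries. The mechanism is already visible at the doubling step: if $\pi$ is an optimal alignment of $X_1\dots X_{2n}$ with $Y_1\dots Y_{2n}$, then cutting $\pi$ between $X_n$ and $X_{n+1}$ splits it, with no loss of score, into an alignment of $(X_1\dots X_n,\,Y_1\dots Y_j)$ and one of $(X_{n+1}\dots X_{2n},\,Y_{j+1}\dots Y_{2n})$ for some interface position $j$, so that
\[
L_{2n}(S)\ \le\ \max_{0\le j\le 2n}\Bigl(L_S(X_1\dots X_n,Y_1\dots Y_j)+L_S(X_{n+1}\dots X_{2n},Y_{j+1}\dots Y_{2n})\Bigr).
\]
I would then invoke Lemma \ref{change}: trimming or extending the $Y$-part of each block back to length $n$ changes its score by at most $\|S\|_{\infty}$ per letter, and the two extremal trimmings at the ends of the doubled strings are what produce the additive $2\|S\|_{\infty}/n$ term. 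The centred block scores, on the other hand, are bounded-difference functions of the $4n$ i.i.d.\ letters with increments $\|S\|_{\delta}$, so McDiarmid's inequality (Lemma \ref{Azuma}) makes each term subgaussian with variance proxy $n\|S\|_{\delta}^2$. A union bound over the at most $3(n+2)$ interface positions then converts the expected maximum into $\max_j\expect[\,\cdot\,]+\|S\|_{\delta}\sqrt{2n\ln(3(n+2))}$; recalling that $c_n\sqrt{\ln n}=\sqrt{2\ln3+2\ln(n+2)}$ reproduces the stated main term. To reach $\lambda(S)$ itself rather than $\lambda_{2n}(S)$ one runs the same cutting argument through $k$ blocks and lets $k\to\infty$.

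The main obstacle is the term $\max_j\expect[\,\cdot\,]$. Although the block scores fluctuate only on the Gaussian scale $\sqrt n$, their \emph{means} at off-diagonal cut points $j\ne n$ are not a priori dominated by the diagonal value $2\expect[L_n(S)]$, and the crude per-$j$ Lipschitz estimate on the displacement $|j-n|$ is of order $n$ and hence useless. The whole point of combining Lemma \ref{change} with the concentration estimate is to show that the cut point realising the maximum lies within $O(\sqrt{n\ln n})$ of the diagonal with overwhelming probability, so that the boundary correction it actually incurs is of the claimed size rather than of order $n$. Making this control uniform over all admissible interface positions — equivalently, ensuring that the maximum over cut points in the multi-block version does not accumulate across the $k$ interfaces — is the technical heart of the estimate and the source of the logarithmic factor in $c_n$.
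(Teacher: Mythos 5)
Your left-hand inequality is fine and matches the paper. But the right-hand inequality, as you set it up, has a genuine gap that you yourself point to and then leave open: after cutting at the $X$-block boundaries, each block is an alignment problem between an $X$-piece of length exactly $n$ and a $Y$-segment of arbitrary length $j$, and nothing you have proved controls the \emph{mean} of such a block score by $\expect[L_n(S)]$ when $j\neq n$. Your proposed fix --- that the maximizing cut point lies within $O(\sqrt{n\ln n})$ of the diagonal with overwhelming probability --- is precisely the statement that would need proof; it does not follow from Lemma \ref{change} (whose per-letter bound gives only the useless $O(n)$ correction you concede) nor from McDiarmid, which controls fluctuations around means but says nothing about the means themselves. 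A localization statement of that kind would essentially require curvature information about $(i,j)\mapsto\expect\left[L_S(X_1\dots X_i,Y_1\dots Y_j)\right]$ that is not available. So the step you call the ``technical heart'' is left unproven, and the argument does not close.

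The paper avoids this problem with a different decomposition. Instead of blocks whose $X$-part has length $n$ and whose $Y$-part is free, it partitions both strings simultaneously into $2k$ blocks subject to the anti-diagonal constraint $r_i-r_{i-1}+s_i-s_{i-1}\in\{n-1,n,n+1\}$, i.e.\ the \emph{combined} length of each block is about $n$. The key point (Lemma \ref{technical}) is that for such a block with $X$-length $p$ and $Y$-length $q$, $p+q=n$, the three variables $L_S(X_{r_{i-1}+1}\dots X_{r_i},Y_{s_{i-1}+1}\dots Y_{s_i})$, $L_S(X_1\dots X_p,Y_1\dots Y_q)$ and $L_S(X_{p+1}\dots X_n,Y_{q+1}\dots Y_n)$ are identically distributed --- the last one because its $X$-part has length $q$ and its $Y$-part has length $p$, so the i.i.d.\ structure together with the symmetry of $S$ swaps the roles of the two strings --- and the last two concatenate into a valid alignment of $X_1\dots X_n$ with $Y_1\dots Y_n$. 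Hence $2\expect[\text{block score}]\leq\expect[L_n(S)]$ uniformly over all blocks, however skewed $(p,q)$ is; the $\pm1$ slack in the block lengths costs $\|S\|_{\infty}$ per block, which is the true origin of the $2\|S\|_{\infty}/n$ term (not end-of-string trimmings, as you suggest). With all means controlled uniformly, a McDiarmid bound per partition plus a union bound over the at most $[3(n+2)]^{2k}$ partitions (the source of the $\ln(n+2)$ inside $c_n$), followed by integrating the tail and letting $k\to\infty$, finishes the proof; no statement about where the optimal cut points sit is ever needed. If you want to rescue your outline, replace your $X$-length-$n$ blocks by combined-length-$n$ blocks and prove the exchangeability bound above; with your current blocks the mean-control problem is real and the argument stalls exactly where you said it does.
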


Note that $c_n$ tends to $\sqrt{2}$ when $n\rightarrow\infty$, so that it effectively acts as a constant. 

\begin{proof} 
Let $k,n\in\N$, $m=k\times n$, and let $\mathcal{P}_{m,n}$ denote the set of all pairs $(\vec{r},\vec{s})$ of $2k$ dimensional integer vectors $\vec{r}=[\begin{smallmatrix} r_1&\dots &r_{2k}\end{smallmatrix}]^{\T}\in\N_0^{2k}$ and $\vec{s}=[\begin{smallmatrix}s_1&\dots&s_{2k})\end{smallmatrix}]^{\T}\in\N_0^{2k}$ that satisfy $r_i-r_{i-1}+s_i-s_{i-1}\in\{n-1,n,n+1\}$ for $i=1,2,\dots,2k$, as well as $0=r_0\leq r_1\leq\dots\leq r_{2k}=m$ and $0=s_0\leq s_1\leq\dots\leq s_{2k}=m$.

For $(\vec{r},\vec{s})\in\mathcal{P}_{m,n}$, let $L_{m}(S,\vec{r},\vec{s})$ denote the sum of optimal alignment scores
\begin{equation}\label{bienchen sum herum}
L_m(S,\vec{r},\vec{s}):=\sum_{i=1}^{2k} L_S(X_{r_{i-1}+1}\dots X_{r_i},
Y_{s_{i-1}+1}\dots Y_{s_i}).
\end{equation}
Thus, $L_m(S,\vec{r},\vec{r})$ is the optimal alignment score with the additional constraint that $X_{r_{i-1}+1}\dots X_{r_i}$ be aligned with $Y_{s_{i-1}+1}\dots Y_{s_i}$ for $i=1,2,\dots,2k$.

Note that for $L_m(S)/m$ to be larger than $x$, at least one of the $L_m(S,\vec{r},\vec{s})/m$ would have to exceed $x$. The following inequality holds therefore for all $x\in\N$,
\begin{equation}\label{1209}
\prob\left[\frac{L_m(S)}{m}\geq x\right]\leq\sum_{(\vec{r},\vec{s})\in\mathcal{P}_{m,n}}
\prob\left[\frac{L_m(S,\vec{r},\vec{s})}{m}\geq x\right].
\end{equation}

Lemma \ref{change} shows that a change in the value of any one of the $2m$  i.i.d.\ variables $X_1,\dots,X_m,Y_1,\dots, Y_m$ after sampling them -- whilst leaving the values of the remaining variables unchanged -- causes the value of $L_m(S,\vec{s},\vec{r})$ to change by at most $\|S\|_{\delta}$.  Lemma \ref{Azuma} thus implies that for any $\Delta>0$ we have 
\begin{equation}\label{samuel}
P\left[L_m(S,\vec{r},\vec{s})-E\left[L_m(S,\vec{r},\vec{s})\right]\geq m\Delta\right]\leq\exp\left\{-\frac{m\Delta^2}{\|S\|_{\delta}^2}\right\}.
\end{equation}
Furthermore, Lemma \ref{technical} will establish that 
\begin{equation*}
\frac{E\left[L_m(S,\vec{r},\vec{s})\right]}{m}\leq\lambda_n(S)+\frac{2\|S\|_{\infty}}{n},
\end{equation*}
so that we have 
\begin{align*}
\prob\left[\frac{L_m(S,\vec{r},\vec{s})}{m}\geq\lambda_n(S)+\frac{2\|S\|_{\infty}}{n}+\Delta\right]
&\leq\prob\left[L_m(S,\vec{r},\vec{s})-E\left[L_m(S,\vec{r},\vec{s})\right]\geq m\Delta\right]\\
&\stackrel{\eqref{samuel}}{\leq}\exp\left\{-\frac{m\Delta^2}{\|S\|_{\delta}^2}\right\}.
\end{align*}
Substituting this last bound into \eqref{1209} with  $x=\lambda_n(S)+\frac{2\|S\|_{\infty}}{n}+\Delta$, we obtain
\begin{equation*}
\prob\left[\frac{L_m(S)}{m}\geq\lambda_n(S)+\frac{2\|S\|_{\infty}}{n}+\Delta\right]\leq 
\left[3(n+2)\right]^{2k}\exp\left\{- \frac{m\Delta^2}{\|S\|_{\delta}^2}\right\},
\end{equation*}
where we used the observation that $|\mathcal{P}_{m,n}|\leq[3(n+2)]^{2k}$. 

Next, fix a constant $c$ and let $\Delta=c/\sqrt{n}$. Substitution into the last estimate yields 
\begin{equation}\label{veryimportant}
\prob\left[\frac{L_m(S)}{m}\geq\lambda_n(S)+\frac{2\|S\|_{\infty}}{n}+\frac{c}{\sqrt{n}}\right]
\leq\exp\left\{-k\left(\frac{c^2}{\|S\|_{\delta}^2}-d_n^2\right)\right\},
\end{equation} 
where $d_n=\sqrt{2\ln(3)-2\ln(n+2)}$. Setting $z:=c-d_n\|S\|_{\delta}$ and 
\begin{equation*}
Z^m:=\sqrt{n}\left(\frac{L_m(S)}{m}-\lambda_n(S)-\frac{2\|S\|_{\infty}}{n}-\frac{d_n\|S\|_{\delta}}{\sqrt{n}}
\right),
\end{equation*}
\eqref{veryimportant} can be expressed as 
\begin{equation*}
\prob\left[Z^m\geq z\right]\leq\exp\left\{-k\times\frac{z^2+2 z d_n \|S\|_{\delta}}{\|S\|_{\delta}^2}\right\}.
\end{equation*} 
For $z>0$, the right-hand side can be bounded by the quadratic term alone, 
\begin{equation*}
\prob\left[Z^m\geq z\right]\leq\exp\left\{-\frac{k z^2}{\|S\|_{\delta}^2}\right\},\quad\forall\,z\geq 0.
\end{equation*} 
This yields a bound on $\expect[Z^m]$, 
\begin{equation*}
\expect\left[Z^m\right]\leq\int_0^\infty\prob\left[Z^m\geq z\right]\diff z
\leq\int_0^\infty\exp\left\{-\frac{k z^2}{\|S\|_{\delta}^2}\right\}\diff z
=\sqrt{\frac{\pi \|S\|_{\delta}}{k}},
\end{equation*}
and taking $k\rightarrow\infty$, we find 
\begin{equation}\label{limsup}
\limsup_{m\rightarrow\infty}\expect\left[Z^m\right]\leq 0.
\end{equation}

Finally, we have
\begin{align*}
\lambda(S)&=\lim_{m\rightarrow\infty}\expect\left[\frac{L_m(S)}{m}\right]\\
&=\lim_{m\rightarrow\infty}\left(\sqrt{n}\expect\left[Z^m\right]+\lambda_n(S)+\frac{2\|S\|_{\infty}}{n}+\frac{d_n\|S\|_{\delta}}{\sqrt{n}}\right)\\
&\stackrel{\eqref{limsup}}{\leq}\lambda_n(S)+\frac{2\|S\|_{\infty}}{n}+\frac{c_n\|S\|_{\delta}\sqrt{\ln n}}{\sqrt{n}},
\end{align*}
where we used $c_n \sqrt{\ln n}=d_n$. Since $\lambda_n(S)<\lambda(S)$ by subadditivity, this proves the lemma.
\end{proof}

\begin{lemma}[McDiarmid's Inequality \cite{mcDiarmid}]\label{Azuma} 
Let $Z_1,Z_1,\dots,Z_m$ be i.i.d.\ random variables that take values in a set $D$, and let $g:D^m\rightarrow\R$ be a function of $m$ variables with the property that 
\begin{equation*}
\max_{i=1,\dots,m}\sup_{z\in D^m, \hat{z}_i\in D}\left|g(z_1,\dots,z_m)-g(z_1,\dots,\hat{z}_i,\dots,z_m)\right|\leq C. 
\end{equation*}
Thus, changing a single argument of $g$ changes its image by less than a constant $C$. Then the following bounds hold, 
\begin{align*}
\prob\left[g(Z_1,\dots,Z_m)-\expect[g(Z_1,\dots,Z_m)]\geq \epsilon\times m\right]&\leq\exp\left\{-\frac{2\epsilon^2 m}{C^2}\right\},\\
\prob\left[\expect\left[g(Z_1,\dots,Z_m)\right]-g(Z_1,\dots,Z_m)\geq \epsilon\times m\right]&\leq\exp\left\{-\frac{2\epsilon^2 m}{C^2}\right\}.
\end{align*}
\end{lemma}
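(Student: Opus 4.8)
The plan is to prove this via the standard Doob martingale construction combined with the Azuma--Hoeffding martingale inequality, whose engine is Hoeffding's lemma on the moment generating function of a bounded random variable. First I would fix the filtration $\mathcal{F}_k:=\sigma(Z_1,\dots,Z_k)$ and define the Doob martingale $V_k:=\expect[g(Z_1,\dots,Z_m)\mid\mathcal{F}_k]$ for $k=0,1,\dots,m$, so that $V_0=\expect[g(Z_1,\dots,Z_m)]$ and $V_m=g(Z_1,\dots,Z_m)$. The increments $D_k:=V_k-V_{k-1}$ form a martingale difference sequence, and the whole deviation $g-\expect[g]$ telescopes as $\sum_{k=1}^m D_k$.

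The crucial step is to show that each increment $D_k$ is confined to an interval of length at most $C$. Using that the $Z_i$ are independent, I would express $V_k$ as a function of $Z_1,\dots,Z_k$ obtained by integrating out $Z_{k+1},\dots,Z_m$, and then form the conditional extremes obtained by replacing $Z_k$ by the best and worst admissible values while keeping $Z_1,\dots,Z_{k-1}$ fixed. The bounded-differences hypothesis on $g$ forces the gap between these two extremes to be at most $C$, and since $D_k$ lies between the corresponding centred quantities, one obtains $\ell_k\le D_k\le\ell_k+C$ for an $\mathcal{F}_{k-1}$-measurable $\ell_k$. I expect this to be the main obstacle: not because it is deep, but because it is precisely where independence is genuinely used, so the substitution argument must be carried out carefully enough that the expectation over the untouched coordinates sees only a change of at most $C$ in the argument of $g$.

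With the bounded increments in hand, I would apply Hoeffding's lemma conditionally: for any $s\in\R$, since $\expect[D_k\mid\mathcal{F}_{k-1}]=0$ and $D_k$ lives in an interval of length $C$, we get $\expect[e^{sD_k}\mid\mathcal{F}_{k-1}]\le\exp(s^2C^2/8)$. Iterating this bound through the tower property of conditional expectation yields $\expect[e^{s(V_m-V_0)}]\le\exp(ms^2C^2/8)$. A Chernoff bound then gives, for $\epsilon>0$ and every $s>0$, the estimate $\prob[g-\expect[g]\ge\epsilon m]\le\exp(-s\epsilon m+ms^2C^2/8)$.

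Finally I would optimise the free parameter by choosing $s=4\epsilon/C^2$, which collapses the exponent to $-2\epsilon^2 m/C^2$ and produces the first claimed bound. The lower-tail inequality follows at once by applying the identical argument to $-g$, which satisfies the same bounded-differences condition with the same constant $C$. The only genuine work is the increment bound of the second paragraph; the remaining steps are the routine Chernoff-plus-Hoeffding machinery.
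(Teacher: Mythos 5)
Your proposal is correct and is exactly the argument the paper points to: the paper's proof of Lemma~\ref{Azuma} consists solely of the remark that it is ``a consequence of the Azuma--Hoeffding Inequality'' with a citation, and your Doob-martingale construction with conditionally bounded increments, Hoeffding's lemma, and the Chernoff optimisation $s=4\epsilon/C^2$ is the standard way that consequence is derived. In particular you correctly obtain the sharp constant $2$ in the exponent by confining each increment $D_k$ to an interval of length $C$ (rather than merely bounding $|D_k|\leq C$), so nothing is missing.
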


\begin{proof}
A consequence of the Azuma-Hoeffding Inequality, see \cite{mcDiarmid}. 
\end{proof}

\begin{lemma}\label{technical} Under the notation introduced in Lemma \ref{rateoflambdan} and its proof, it is true that for all $(\vec{r},\vec{s})\in{\mathcal P}_{m,n}$ the following bound applies,
\begin{equation*}
\frac{\expect\left[L_m\left(S,\vec{r},\vec{s}\right)\right]}{m}\leq\lambda_n(S)+\frac{2\|S\|_*}{n}. 
\end{equation*} 
\end{lemma}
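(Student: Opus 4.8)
The plan is to exploit the fact that $L_m(S,\vec r,\vec s)$ is, by its very definition \eqref{bienchen sum herum}, a sum of optimal alignment scores of disjoint blocks, and to reduce each block to a balanced alignment of two strings of common length $n$. Writing $p_i:=r_i-r_{i-1}$ and $q_i:=s_i-s_{i-1}$, the defining property of $\mathcal P_{m,n}$ gives $p_i+q_i\in\{n-1,n,n+1\}$ for every $i$, while $\sum_i(p_i+q_i)=2m=2kn$. Since the letters feeding distinct blocks are disjoint and i.i.d., each summand equals $\expect[L_S^{(p_i,q_i)}]$, where I abbreviate $L_S^{(p,q)}:=L_S(X_1\dots X_p,Y_1\dots Y_q)$; this depends on the block only through the pair $(p_i,q_i)$. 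Everything therefore reduces to controlling $\expect[L_S^{(p,q)}]$ when $p+q$ is close to $n$.

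The key step, and the one I expect to carry the real content, is the sub-claim that whenever $p+q=n$,
\begin{equation*}
\expect\left[L_S^{(p,q)}\right]\leq\tfrac12\,n\,\lambda_n(S).
\end{equation*}
To prove it I would pair the block of shape $(p,q)$ with a second, independent block of the \emph{transposed} shape $(q,p)$: take independent i.i.d.\ strings $X^{(1)},X^{(2)},Y^{(1)},Y^{(2)}$ with $|X^{(1)}|=|Y^{(2)}|=p$ and $|Y^{(1)}|=|X^{(2)}|=q$. Concatenating an optimal alignment of $(X^{(1)},Y^{(1)})$ with one of $(X^{(2)},Y^{(2)})$ yields a valid (not necessarily optimal) alignment of the length-$n$ strings $X^{(1)}X^{(2)}$ and $Y^{(1)}Y^{(2)}$, so that
\begin{equation*}
L_S\!\left(X^{(1)}X^{(2)},Y^{(1)}Y^{(2)}\right)\geq L_S\!\left(X^{(1)},Y^{(1)}\right)+L_S\!\left(X^{(2)},Y^{(2)}\right).
\end{equation*}
The left-hand side is distributed as $L_n(S)$, so taking expectations gives $n\lambda_n(S)\geq\expect[L_S^{(p,q)}]+\expect[L_S^{(q,p)}]$; because $S$ is symmetric and the $X$'s and $Y$'s are identically distributed, the two terms on the right are equal, which yields the sub-claim. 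This transposition-plus-concatenation trick is the crux: it converts an \emph{unbalanced} block into a balanced length-$n$ alignment without losing a constant.

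It then remains to absorb the discrepancy $p+q\in\{n-1,n+1\}$. Here I would invoke the extension estimate \eqref{second claim} of Lemma \ref{change}: adjoining or deleting a single letter changes the optimal alignment score by at most $\|S\|_{\infty}$, so by cropping or extending one block string I reduce to the case $p+q=n$ at an additive cost of $\|S\|_{\infty}$, giving $\expect[L_S^{(p,q)}]\leq\tfrac12 n\lambda_n(S)+\|S\|_{\infty}$ for every admissible block. Summing over the $2k$ blocks and dividing by $m=kn$ produces
\begin{equation*}
\frac{\expect[L_m(S,\vec r,\vec s)]}{m}\leq\frac{2k\left(\tfrac12 n\lambda_n(S)+\|S\|_{\infty}\right)}{kn}=\lambda_n(S)+\frac{2\|S\|_{\infty}}{n},
\end{equation*}
which is exactly the asserted bound with $\|S\|_*=\|S\|_{\infty}$. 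The only points needing care are the degenerate blocks with $p=0$ or $q=0$ (the pairing argument applies verbatim) and checking that the single-letter correction costs $\|S\|_{\infty}$ rather than $\|S\|_{\delta}$, which is precisely what \eqref{second claim} delivers.
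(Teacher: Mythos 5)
Your proposal is correct and follows essentially the same route as the paper: the paper also pairs each block of shape $(p,q)$ with its transposed shape $(q,p)$ (realized there as the complementary segment $L_S(X_{p+1}\dots X_n,\,Y_{q+1}\dots Y_n)$, identically distributed by symmetry of $S$ and the i.i.d.\ letters), concatenates the two alignments to bound their sum by $L_n(S)$, handles the $p+q\in\{n-1,n+1\}$ discrepancy with \eqref{second claim} at cost $\|S\|_{\infty}$, and sums over the $2k$ blocks. Your only cosmetic deviation is introducing fresh independent strings rather than reusing disjoint segments of $X_1\dots X_n$ and $Y_1\dots Y_n$, which changes nothing in substance.
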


\begin{proof}
Assuming first that $r_i-r_{i-1}+s_i-s_{i-1}=n$, we first note that, by the i.i.d.\ nature of the random variables $X_j$ and $Y_k$ and by symmetry of the scoring function $S$, the following random variables are identically distributed, 
\begin{align*}
&L_S\left(X_{r_{i-1}+1}\dots X_{r_i},Y_{s_{i-1}+1}\dots Y_{s_i}\right),\\
&L_S\left(X_{1}\dots X_{r_i-r_{i-1}},Y_{1}\dots Y_{s_i-s_{i-1}}\right),\\
&L_S\left(X_{r_i-r_{i-1}+1}\dots X_{n},Y_{s_i-s_{i-1}+1}\dots Y_{n}\right). 
\end{align*}
Furthermore, it must be true that 
\begin{multline*}
L_S\left(X_{1}\dots X_{r_i-r_{i-1}},Y_{1}\dots Y_{s_i-s_{i-1}}\right)+
L_S\left(X_{r_i-r_{i-1}+1}\dots X_{n},Y_{s_i-s_{i-1}+1}\dots Y_{n}\right)\\
\leq L_S\left(X_{1}\dots X_{n}, Y_{1}\dots Y_{n}\right), 
\end{multline*}
since any alignments of the two pairs of strings in the left-hand side can be concatenated to yield a valid alignment of the pair of strings in the right-hand side. Taking expectations, we find
\begin{equation}\label{oneletter}
2\expect\left[L_S\left(X_{r_{i-1}+1}\dots X_{r_i},Y_{s_{i-1}+1}\dots Y_{s_i}\right)\right]\leq E[L_n].
\end{equation}

Next, allowing $r_i-r_{i-1}+s_i-s_{i-1}$ any value in $\{n-1, n, n+1\}$, this situation is obtained from the previous case by lengthening or shortening at most one of the strings involved by at most one letter. By Lemma \ref{change}, such an amendment cannot change the optimal alignment score by more than $\|S\|_{\infty}$, so that \eqref{oneletter} gives rise to the inequality 
\begin{equation}\label{talon}
\expect\left[L_S\left(X_{r_{i-1}+1}\dots X_{r_i},Y_{s_{i-1}+1}\dots Y_{s_i}\right)\right]\leq\frac{E[L_n]}{2}+\|S\|_{\infty},
\end{equation}
which applies to the general situation. Taking expectations on both sides of \eqref{bienchen sum herum} and substituting \eqref{talon}, we find 
\begin{equation*}
\expect\left[L_m\left(S,\vec{r},\vec{s}\right)\right]\leq\frac{2k\expect\left[L_n(S)\right]}{2}+2k\|S\|_{\infty}.\end{equation*}
Division by $m$ now yields the claim of the lemma.
\end{proof}

\comment{
\begin{theorem}\label{Azuma etc}
For fixed $\epsilon>0$ and scoring function $S$ there exists $K>0$ and $n_{\epsilon}\in\N$ such that 
\begin{align}
\prob\left[\frac{L_n(S)}{n}\geq\lambda(S)+\epsilon\right]&\leq\e^{-Kn},\quad\forall\,n\in\N,\label{cor1}\\
\prob\left[\frac{L_n(S)}{n}\leq\lambda_n(S)-\epsilon\right]&\leq\e^{-Kn},\quad\forall\,n\in\N,\label{cor2}\\
\prob\left[\frac{L_n(S)}{n}\leq\lambda(S)-\epsilon\right]&\leq\e^{-Kn},\quad\forall\,n\gg n_{\epsilon}.\label{cor3}
\end{align}
\end{theorem}

\begin{proof}
We know from Lemma \ref{change} that 
\begin{equation*}
g(X_1,\dots,X_n,Y_1,\dots,Y_n)=S(X_1\dots X_n, Y_1\dots Y_n)=L_n(S)
\end{equation*}
satisfies the assumptions of Lemma \ref{Azuma} with $m=2n$ and $C=\|S\|_{\delta}$. McDiarmid's Inequality therefore shows 
\begin{align}
\prob\left[\frac{L_n(S)}{n}\geq\lambda_n(S)+\epsilon\right]&=
\prob\left[L_n(S)\geq\expect\left[L_n(S)\right]+\frac{\epsilon}{2}\times 2n\right]\nonumber\\
&\leq\exp\left\{-\frac{\epsilon^2}{\|S\|_{\delta}^2}\times n\right\},\label{thu1}
\end{align}
and similarly, 
\begin{equation}\label{thu2}
\prob\left[\frac{L_n(S)}{n}\leq\lambda_n(S)-\epsilon\right]\leq\exp\left\{-\frac{\epsilon^2}{\|S\|_{\delta}^2}\times n\right\}.
\end{equation}
Claim \eqref{cor2} therefore holds with $K=\frac{\epsilon^2}{4\|S\|_{\delta}^2}$.

Furthermore, Lemma \ref{rateoflambdan} established that 
\begin{equation}\label{difference2}
\lambda_n(S)\leq\lambda(S)\leq\lambda_n(S)+ c_n\|S\|_{\delta}\frac{\sqrt{\ln n}}{\sqrt{n}}+\frac{2\|S\|_{\infty}}{n},\quad\forall\,n\in\N,
\end{equation}
holds, where $c_n=\sqrt{2\ln3+2\ln(n+2)}/\sqrt{\ln(n)}$.  Using the first inequality from \eqref{difference2} in conjunction with \eqref{thu1}, we find 
\begin{equation*}
\prob\left[\frac{L_n(S)}{n}\geq\lambda(S)+\epsilon\right]\leq\prob\left[\frac{L_n(S)}{n}\geq\lambda_n(S)+\epsilon\right]\leq\exp\left\{-\frac{\epsilon^2}{\|S\|_{\delta}^2}\times n\right\},
\end{equation*}
which shows that Claim \eqref{cor1} holds with $K=\frac{\epsilon^2}{4\|S\|_{\delta}^2}$.

Using now the second inequality from \eqref{difference2} in conjunction with \eqref{thu2}, we find 
\begin{align*}
\prob\left[\frac{L_n(S)}{n}\leq\lambda(S)-\epsilon\right]
&\leq\prob\left[\frac{L_n(S)}{n}\leq\lambda_n(S)-\left(\epsilon-c_n\|S\|_{\delta}\frac{\sqrt{\ln n}}{\sqrt{n}}-\frac{2\|S\|_{\infty}}{n}\right)\right]\\
&\leq\exp\left\{-\frac{\left(\epsilon-c_n\|S\|_{\delta}\frac{\sqrt{\ln n}}{\sqrt{n}}-\frac{2\|S\|_{\infty}}{n}\right)^2}{\|S\|_{\delta}^2}\times n\right\}\\
&\leq\exp\left\{-\frac{\epsilon^2}{4\|S\|_{\delta}^2}\times n\right\},\quad\forall\,n\geq n_{\epsilon},
\end{align*}
where $n_{\epsilon}\in\N$ is chosen large enough to satisfy
\begin{equation*}
\epsilon-c_n\|S\|_{\delta}\frac{\sqrt{\ln n}}{\sqrt{n}}-\frac{2\|S\|_{\infty}}{n}>\frac{\epsilon}{2},\quad\forall\,n\geq n_{\epsilon}.
\end{equation*}
This shows that \eqref{cor3} holds for $K=\frac{\epsilon^2}{4\|S\|_{\delta}^2}$.
\end{proof}
}

\section{Acknowledgments} 

All three authors wish to thank the Engineering and Physical Sciences Research Council (EPSRC) and the Institute of Mathematics and its Applications (IMA) for generous financial support, Endre S\"uli for supporting their small grant application to the IMA, Pembroke College for granting a College Associateship to Saba Amsalu and Heinrich Matzinger, and the Oxford Mathematical Institute for hosting them during their Oxford visit. \\

Heinrich Matzinger whishes to thank the organizers of the XVI EBP (XVI Escola Brazileira de Probabilidade) for inviting him to give a plenary lecture on the topic of this article. He also whishes to thank Professor Fabio Machado
for inviting him to Brazil for collaboration and for introducing him to the field of gene survival models. 

\bibliographystyle{plain}
\bibliography{bio12}
\end{document}